\newcommand{\A}{\ensuremath{{\mathbb{A}}}}
\newcommand{\C}{\ensuremath{{\mathbb{C}}}}
\newcommand{\Z}{\ensuremath{{\mathbb{Z}}}\xspace}
\renewcommand{\P}{\ensuremath{{\mathbb{P}}}}
\newcommand{\Q}{\ensuremath{{\mathbb{Q}}}}
\newcommand{\ra}{\rightarrow}
\newcommand{\lra}{\longrightarrow}
\newcommand\Det{\operatorname{Det}}
\newcommand\sHom{\operatorname{\mathcal{H}om}}
\newcommand\im{\operatorname{im}}
\newcommand\Stab{\operatorname{Stab}}
\newcommand\Sym{\operatorname{Sym}}
\newcommand\Tr{\operatorname{Tr}}
\newcommand\tensor{\otimes}
\newcommand\isom{\cong}
\newcommand\sub{\subset}
\newcommand\tesnor{\otimes}
\newcommand\disc{\operatorname{disc}}
\newcommand\Disc{\operatorname{Disc}}
\newcommand\GL{\operatorname{GL}}
\newcommand\SL{\operatorname{SL}}
\newcommand\Spec{\operatorname{Spec}}
\newcommand\Rf{\ensuremath{R_f}\xspace}
\renewcommand\O{\mathcal{O}}
\newcommand\BS{\ensuremath{S}\xspace}
\newcommand\OS{\ensuremath{{\O_\BS}}\xspace}
\newcommand\map[4]{\ensuremath{\begin{array}{ccc}#1&\lra&#2\\#3&\mapsto&#4\end{array}}}
\newcommand\bij[2]{\ensuremath{\left\{\parbox{2.5 in}{#1}\right\} \longleftrightarrow \left\{\parbox{2.5 in}{#2}\right\}}}
\newcommand\mapin[2]{\ensuremath{\left\{\parbox{2.5 in}{#1}\right\} \longrightarrow \left\{\parbox{2.5 in}{#2}\right\}}}
\newcommand\bq{\begin{equation}}
\newcommand\eq{\end{equation}}
\newtheorem{proposition}{Proposition}[section]
\newtheorem{theorem}[proposition]{Theorem}
\newtheorem{corollary}[proposition]{Corollary}
\newtheorem{lemma}[proposition]{Lemma}
\theoremstyle{remark}
\newtheoremstyle{citing}
 {3pt}
 {3pt}
 {\itshape}
 {}
 {\bfseries}
 {.}
 {.5em}
 {\thmnote{#3}}
\theoremstyle{citing}
\newtheorem*{varthm}{}
\newenvironment{definition}{\vspace{2 ex}{\noindent{\bf Definition. }}}{\vspace{2 ex}}
\begin{document}

\title{Quartic rings associated to binary quartic forms}

\author{Melanie Matchett Wood}
\thanks{email: mwood@math.stanford.edu}

\address{%
American Institute of Mathematics and Stanford University\\
Department of Mathematics\\
Building 380, Sloan Hall\\
Stanford, California 94305\\
   USA
}

\keywords{quartic rings, binary quartic forms, cubic resolvents}


\begin{abstract}
We give a bijection between binary quartic forms and quartic rings with
a monogenic cubic resolvent ring, relating the rings associated to binary quartic forms with Bhargava's cubic resolvent rings.  This gives a parametrization of quartic rings with monogenic cubic resolvents. 
We also give a geometric interpretation of this parametrization.
\end{abstract}

\maketitle

\section{Introduction}
Algebraic objects associated to binary forms have long been studied.  Dedekind originally associated a quadratic ring and ideal class to every binary quadratic form \cite{DD}.  In fact, binary quadratic forms exactly parametrize ideal classes of quadratic rings (see \cite[Section 5.2]{Cohen}, \cite{Kneser}, or  \cite{BinQuad} for a treatment that includes all binary quadratic forms, even the zero form!).
In 1940, Delone and Faddeev \cite{DF} associated cubic rings to binary cubic forms and found that binary cubic forms exactly parametrize cubic rings  (see also \cite{GGS} for a treatment of all binary cubic forms).

In fact, one can associate an $n$-ic ring (a ring isomorphic to $\Z^n$ as a $\Z$ module) to a binary $n$-ic form for any $n$.
(When $n=2,3,4$, we also call an $n$-ic ring \emph{quadratic, cubic, quartic}, respectively.)  
Early work on the rings associated to binary forms was done by Birch and Merriman \cite{BM} and Nakagawa \cite{Naka}.  In \cite{PrimeSplit}, 
Del Corso, Dvornicich, and Simon determine the splitting of the prime $p$ in such a ring in terms of the factorization of the binary $n$-ic form modulo $p^k$.
In \cite{SimonIdeal}, Simon associates an ideal class of the associated ring to a binary $n$-ic form, and in \cite{Simonobst} he applies this ideal class to study integer solutions to equations of the form $Cy^d=F(x,z)$, where $F$ is a binary form. 
In \cite{binarynic}, it is determined exactly what algebraic structures are parametrized by binary $n$-ic forms, for all $n$.  This structure is a rank $n$ ring and an ideal class for that ring, such that the 
action of the ring on the ideal class satisfies a certain exact sequence (which comes naturally from geometry).  When $n=2$, the exact sequence condition is vacuous, and when $n=3$ 
the condition forces the ideal class to be the unit ideal.  In this note we give a different point of view (from \cite{binarynic}) on the algebraic data parametrized by binary quartic forms.
We prove the following, which is the main result of this paper.
\begin{theorem}\label{T:Main}
There is a natural, discriminant preserving bijection between the set of $\GL_2(\Z)$-equivalence classes of binary quartic forms and the set of isomorphism classes of pairs $(Q,C)$ where $Q$ is a quartic
ring and $C$ is a monogenic cubic resolvent of $Q$ (where isomorphisms are required to preserve the generator of $C$ modulo $\Z$).
\end{theorem}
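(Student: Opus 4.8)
The plan is to realize the bijection by routing both sides through Bhargava's parametrization of quartic rings together with their cubic resolvents by $\GL_2(\Z)\times\GL_3(\Z)$-orbits of pairs of integral ternary quadratic forms $(A,B)\in\Z^2\otimes\Sym^2(\Z^3)$, and then to identify binary quartic forms with exactly the sub-locus on which the cubic resolvent is forced to be \emph{monogenic}. To build the forward map I would attach to a binary quartic $f(x,y)=f_0x^4+f_1x^3y+f_2x^2y^2+f_3xy^3+f_4y^4$ its associated quartic ring $R_f$ via the general binary-$n$-ic construction of Nakagawa \cite{Naka} and \cite{binarynic}, with its standard $\Z$-basis coming from a root $\theta$ of $f(x,1)$. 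Separately I form the classical resolvent cubic of $f$, viewed as a binary cubic form $g_f$ whose leading coefficient is $1$; by the Delone--Faddeev correspondence \cite{DF} such a monic binary cubic yields a monogenic cubic ring $C_f$ with a distinguished generator $\theta_C$ defined modulo $\Z$. Thus the candidate image $(R_f,C_f)$ already has the required shape.

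The substance is to check that $C_f$ is genuinely a cubic resolvent of $R_f$ in Bhargava's axiomatic sense and that the assignment lands bijectively on the monogenic locus. For this I would write down, directly from the coefficients $f_i$, an explicit pair $(A_f,B_f)\in\Z^2\otimes\Sym^2(\Z^3)$ and verify by a careful (but essentially mechanical) computation that the quartic ring Bhargava attaches to $(A_f,B_f)$ is isomorphic to $R_f$ and that the binary cubic $\det(A_f x - B_f y)$ recovers $g_f$, hence its cubic ring is $C_f$. This simultaneously forces the resolvent axioms, since they hold for \emph{every} Bhargava pair. The key structural feature is that $(A_f,B_f)$ should be normalizable so that $A_f$ is a fixed standard form, reflecting that $f$ has only five coefficients rather than the twelve of a general pair; this normalization is precisely what makes $\det(A_f x - B_f y)$ monic, i.e. makes $C_f$ monogenic.

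The main obstacle, and the crux of the bijectivity, is the inverse map. Starting from an abstract pair $(Q,C)$ with $C$ monogenic and a chosen generator $\theta_C$ modulo $\Z$, I must recover a single binary quartic and prove independence of all choices. I would represent $(Q,C)$ by a Bhargava pair $(A,B)$, unique up to $\GL_2(\Z)\times\GL_3(\Z)$, and then use the monogenicity of $C$ to pin down the $\GL_3(\Z)$-freedom: the generator $\theta_C$ singles out the basis $1,\theta_C,\theta_C^2$ of $C$ (up to the translations $\theta_C\mapsto\theta_C+n$ that preserve $\theta_C$ modulo $\Z$), which under Delone--Faddeev forces $\det(Ax-By)$ to be monic and thereby transports $(A,B)$ into the normalized shape above. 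Reading off $B$ then produces a binary quartic, well-defined up to the residual $\GL_2(\Z)$-action together with these integer translations, and I must verify that this residual action is exactly $\GL_2(\Z)$-equivalence of binary quartic forms. The monogenicity hypothesis is essential here: a non-monogenic cubic resolvent admits no such normalization, so the binary quartics hit precisely the monogenic locus and nothing more, and the two maps are mutually inverse.

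Finally I would record discriminant-preservation and naturality. The discriminant of $f$ equals the discriminant of $R_f$, equivalently of the Bhargava pair, so it is automatically respected by the whole chain; and $\GL_2(\Z)$-equivariance of each stage---binary quartic, resolvent cubic $g_f$, and the pair $(A_f,B_f)$---shows that $\GL_2(\Z)$-equivalence classes map to isomorphism classes. The one genuinely delicate bookkeeping point to nail down is the integer translations $\theta_C\mapsto\theta_C+n$: I must confirm they act compatibly on $Q$, $C$, and the form, so that the requirement that isomorphisms preserve the generator modulo $\Z$ on the ring side corresponds exactly to $\GL_2(\Z)$-equivalence on the form side.
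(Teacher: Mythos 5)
Your plan is the same route the paper takes: push a binary quartic to a Bhargava pair with a \emph{fixed} first form, identify monogenicity of the cubic resolvent with the determinant cubic having unit leading coefficient, and then analyze the residual symmetry. The outline is sound, but as written it rests on two arithmetic facts that you neither prove nor cite, and they are precisely the crux of the theorem rather than bookkeeping. First, in your inverse map you argue that choosing the monogenic basis $1,\theta_C,\theta_C^2$ of $C$ makes $\det(Ax-By)$ monic and ``thereby transports $(A,B)$ into the normalized shape.'' That inference is incomplete: a unit leading coefficient only tells you $4\Det(A)=\pm 1$; to replace $A$ by the fixed standard form $A_0$ you still need an $\SL_3(\Z)$ change of basis of $Q$, and this requires the classical fact that there is exactly \emph{one} $\SL_3(\Z)$-equivalence class of integral ternary quadratic forms of determinant $-1/4$ (the paper invokes this, citing Flath, to prove its surjectivity statement). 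Without class number one here, the inverse map is not even well defined on the normalized locus.

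Second, and more seriously, your injectivity step is deferred rather than done. After normalization, two binary quartics give isomorphic pairs $(Q,C)$ exactly when the pairs $(A_0,B)$ and $(A_0,B')$ differ by an element of $\GL_2(\Z)\times\GL_3(\Z)$ preserving the normalization; up to the translation subgroup and orientation constraints this reduces to the stabilizer $\Stab(A_0)\subset \SL_3(\Z)$. Your proposal says ``I must verify that this residual action is exactly $\GL_2(\Z)$-equivalence of binary quartic forms,'' but offers no argument. The containment $\im(\rho)\subset\Stab(A_0)$ (where $\rho:\GL_2(\Z)\to\SL_3(\Z)$ is the symmetric-square representation) is a formal computation; the reverse containment $\Stab(A_0)\subset\im(\rho)$ is a genuine theorem about the integral orthogonal group of an isotropic ternary form, which the paper takes from Cassels (Chapter 13, Lemma 5.2). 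If $\Stab(A_0)$ were strictly larger than $\im(\rho)$, inequivalent binary quartics would map to isomorphic pairs and your bijection would fail, so this cannot be waved through. Supplying (or citing) these two facts--—together with the minor sign/orientation bookkeeping (the determinant cubic has leading coefficient $-1$, and the isomorphism $(g,h)$ must satisfy $\det(g)\det(h)=1$, which is what forces $h\in\SL_3(\Z)$ once $g$ lies in the translation subgroup)—--would complete your argument and make it coincide with the paper's proof.
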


In particular, in this paper we construct the bijection of Theorem~\ref{T:Main} explicitly.  For example, the construction of the quartic ring is given in Equation~\eqref{E:multtable}.
 The $\GL_2(\Z)$ action on binary quartic forms is given by 
$\left(\begin{smallmatrix}
                         a & b\\
			c & d
                        \end{smallmatrix}\right)\circ f(x,y)= f(ax+cy,bx+dy)$.
A \emph{monogenic} ring is one that is generated by one element as a $\Z$-algebra. 
  The above simple criteria for when a quartic ring is associated to a binary quartic form is an application of the notion of the \emph{cubic resolvent} of a quartic ring, which was introduced by Bhargava \cite{HCL3} in his parametrization of quartic rings with their cubic resolvents by pairs of ternary quadratic forms. 
Theorem \ref{T:Main} is used by Bhargava and Shankar \cite{BS} in their determination of the average number of $2$-torsion elements in the class groups in monogenic maximal cubic orders.  Surprisingly, these averages are different than for general maximal cubic orders!

We recall Bhargava's parametrization of quartic rings and their cubic resolvents here.  If we write ternary quadratic forms as matrices, we can give the $\GL_3(\Z)$ action on pairs of ternary quadratic forms as $(A,B)\mapsto (gAg^t, gBg^t)$ for $g\in\GL_3(\Z)$.
The action of $g=\left(\begin{smallmatrix}
                         a & b\\
			c & d
                        \end{smallmatrix}\right)\in\GL_2(\Z)$ on a pair $(A,B)$ of ternary quadratic forms takes
$(A,B)$ to $(aA+bB,cA+dB)$.
\begin{theorem}\cite[Theorem 1]{HCL3}\label{T:B}
 There is a natural, discriminant preserving bijection between the set of $\GL_3(\Z) \times \GL_2(\Z)$-equivalence classes of pairs of ternary quadratic forms and isomorphism classes $(Q,C)$ where $Q$ is a quartic ring and $C$ is a cubic resolvent of $Q$.
\end{theorem}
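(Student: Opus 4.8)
The plan is to exhibit explicit maps in both directions and then check that they are mutually inverse, equivariant for the two group actions, and discriminant preserving. Throughout I view a pair $(A,B)$ of integral ternary quadratic forms as a pencil of quadratic forms on a rank-$3$ lattice $M$, the pencil being parametrized by a rank-$2$ lattice $N$; then $\GL_3(\Z)=\GL(M)$ acts by change of variables on $M$ (as $(A,B)\mapsto(gAg^t,gBg^t)$) and $\GL_2(\Z)=\GL(N)$ acts by forming linear combinations of $A$ and $B$. The geometric picture guiding the construction is that $A$ and $B$ cut out two conics in $\P(M)\isom\P^2$ meeting in four points; the quartic ring $Q$ will be the ring of functions on these four points, and the cubic resolvent $C$ will record the three ways of splitting the four points into two pairs.

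For the forward map, given $(A,B)$ I first attach the cubic resolvent: the binary cubic form $f(x,y)=\det(xA-yB)\in\Sym^3 N^*$ (with the normalization forced by the half-integral off-diagonal convention) is the classical resolvent cubic, and I let $C$ be the cubic ring that Delone and Faddeev \cite{DF} associate to $f$, identifying $N$ with $C/\Z$ up to the standard duality. I then define $Q$ by an explicit multiplication table on a basis $1,\alpha_1,\alpha_2,\alpha_3$ whose structure constants $c_{ij}^k$ are fixed integer polynomials in the entries of $A$ and $B$, built from the $2\times 2$ minors and the adjugate of the pencil $xA-yB$ (hence homogeneous of degree $2$ in $A,B$ together), and chosen so that the resolvent mapping of $Q$ is exactly the $N$-valued quadratic map $M\to N$ given by $(A,B)$. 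The content of this step is to verify that the $c_{ij}^k$ define a commutative, associative, unital ring and that $(C,\text{resolvent map})$ is a cubic resolvent of $Q$.

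For the inverse map, given $(Q,C)$ the resolvent mapping attached to the cubic resolvent is a quadratic map $Q/\Z\to C/\Z$ between a rank-$3$ and a rank-$2$ lattice; choosing $\Z$-bases expresses it as a pair $(A,B)$ of ternary quadratic forms (ternary because $\dim Q/\Z=3$, a pair because $\dim C/\Z=2$), well defined up to the change-of-basis actions of $\GL(Q/\Z)=\GL_3(\Z)$ and $\GL(C/\Z)=\GL_2(\Z)$. That this undoes the forward map is immediate by construction; the real point is the other composite, namely that the multiplication on an arbitrary $Q$ is forced by its resolvent data, so that reading off $(A,B)$ and re-running the forward construction returns $Q$. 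Rather than verify associativity and this reconstruction by brute force, I would argue by Zariski density: both constructions are polynomial in the coordinates (structure constants on one side, form coefficients on the other), and on the nondegenerate locus, where the four intersection points are distinct and $Q\tensor\Q$ is \'etale, the four-points/three-pairings description makes both ring structures transparent and the two maps manifestly inverse. Since this locus is dense, the required identities hold identically, hence integrally. Equivariance is built in, and discriminant preservation follows once one checks that the generating invariant $\disc(A,B)$ of degree $12$ equals $\disc(f)=\disc(C)=\disc(Q)$, which again may be confirmed on the dense \'etale locus.

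The crux, and the step I expect to be the main obstacle, is showing that the cubic resolvent axioms are rigid enough to reconstruct the quartic multiplication, i.e. that $(C,\phi)$ determines $Q$, together with the attendant associativity check for the explicit $c_{ij}^k$. The Zariski-density reduction to the \'etale case is what makes this tractable, but one must still pin down the integral structure constants precisely, not merely over the dense locus, and confirm that the Delone--Faddeev normalization of $f$ matches the resolvent of the ring $Q$ built from $(A,B)$.
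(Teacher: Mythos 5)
You should know at the outset that the paper contains no proof of Theorem~\ref{T:B}: it is quoted verbatim from Bhargava \cite[Theorem 1]{HCL3} and used as a black box, so the only meaningful comparison is with Bhargava's original argument. Your outline in fact tracks that argument quite closely in its overall shape: the resolvent cubic $4\Det(Ax-By)$ (you correctly note the normalization forced by the half-integral convention), Delone--Faddeev to produce $C$, an explicit multiplication table for $Q$ whose structure constants $c_{ij}^k$ are integer polynomials in $2\times 2$ minors of the coefficients of the pencil, and the inverse map obtained by expressing the quadratic resolvent map $Q/\Z \to C/\Z$ in chosen bases, well defined up to $\GL_3(\Z)\times\GL_2(\Z)$. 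The geometric picture of four points and their three pair-splittings likewise matches the Deligne--Wood interpretation that this paper recalls in its geometric section.

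The genuine gap sits exactly at the step you yourself flag as the crux, and your proposed fix does not reach it. Zariski density legitimately verifies polynomial identities in the free coordinates $a_{ij}, b_{ij}$ --- associativity and commutativity of the constructed multiplication, and the identity $\disc(Q)=\disc(C)=\disc(4\Det(Ax-By))$ --- since an integer polynomial identity holding on a Zariski-dense subset of the affine space of pairs holds identically. But the reconstruction direction quantifies over \emph{arbitrary} pairs $(Q,C)$, including non-reduced and degenerate ones, and these are not a priori points of an irreducible parameter space in which the \'etale locus is dense; asserting that they are is circular, because the parametrization of such pairs by the affine space of $(A,B)$ is precisely the theorem being proved. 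What closes this in \cite{HCL3} is an observation your proposal never isolates: condition (1) of the resolvent definition (Definition~\ref{D:real} in this paper), namely $\delta(1\wedge x\wedge y\wedge xy)=1\wedge \phi(x)\wedge\phi(y)$, is \emph{linear} in the unknown structure constants $c_{ij}^k$, since $xy$ is. Hence for each fixed $(A,B)$ --- degenerate or not --- the resolvent conditions form a linear system that is solved explicitly and uniquely once the translation freedom in the bases is normalized, and this uniqueness for \emph{every} pair, not merely generic ones, is what forces an arbitrary $(Q,C)$ to coincide with the ring rebuilt from its coordinates, giving injectivity and surjectivity simultaneously. Only after that linear-algebra step does your density argument do its legitimate work of confirming that the solved $c_{ij}^k$ satisfy the ring axioms and the discriminant identity. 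As written, your proof would establish the bijection only over the nondegenerate locus, which is strictly weaker than the theorem (and weaker than what this paper needs, since its Theorem~\ref{T:Main} explicitly exploits non-Gorenstein, e.g. zero-form, cases).
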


 Cubic resolvents are integral models of the classical notion of the cubic resolvent field of a quartic field, and they have the same discriminant as their quartic ring.  
A cubic resolvent $C$ of a quartic ring $Q$ comes with a quadratic map from $Q$ to $C$ (suppressed from the notation $(Q,C)$), i.e. a function $q: Q\ra C$ such that for $a\in \Z$, we have $q(ax)=a^2q(x)$, and also such that $q(x+y)-q(x)-q(y)$ is a bilinear form in $x$ and $y$.
When $Q$ is a quartic order in a field $\Q(\alpha)$ whose Galois closure has Galois group $S_4$ or $A_4$, then $\Q(\alpha\alpha'+\alpha'\alpha'')$ is the classical cubic resolvent field of $\Q(\alpha)$, where
$\alpha',\alpha'',\alpha'''$ denote the conjugates of $\alpha$.
In this case, a cubic resolvent ring of $Q$ can be defined to be an order $C$ in the cubic resolvent field with the same discriminant as $Q$ such that for all $x\in Q$, we have $xx'+x''x'''\in C$.  The quadratic map from $Q$ to $C$ is $x\ra xx'+x''x'''$.
For other quartic rings $Q$, the definition is a little more subtle (in particular there is no classical cubic resolvent field) and is given in the Appendix Section~\ref{A:CubRes}. 
 Every quartic ring has a cubic resolvent ring and quartic maximal  orders have a unique cubic resolvent ring \cite[Corollary 4]{HCL3}.
An isomorphism $(Q,C)$ to $(Q',C')$ of pairs each consisting of a quartic ring and a cubic resolvent of that quartic ring is given by 
ring isomorphisms $Q\isom Q'$ and $C \isom C'$ that commute with the quadratic maps $Q\ra C$ and $Q' \ra C'$.  Note that in Theorem~\ref{T:Main} the pairs $(Q,C)$ also come with a generator of $C$ as a $\Z$-algebra, and in an isomorphism between $(Q,C)$ and $(Q',C')$, the induced map $C/\Z \isom C'/\Z$ must take the chosen generator of $C$ to the chosen generator of $C'$.

Theorem~\ref{T:B} gives a parametrization of all quartic rings with their cubic resolvents.  Most cubic rings are not generated by one element as a $\Z$-algebra, but the special cubic rings which are generated by one element are called monogenic.  Our Theorem~\ref{T:Main} gives a parametrization of quartic rings with monogenic cubic resolvents.  Alternatively, Theorem~\ref{T:Main} can be viewed as answering the question ``In the association of quartic rings to binary quartic forms (as in \cite{Naka, PrimeSplit, SimonIdeal, binarynic}), which quartic rings appear?'')

We prove Theorem~\ref{T:Main} by mapping binary quartic forms to pairs of ternary quadratic forms in a way the respects the constructions on the associated quartic rings (as in 
\cite{Naka, PrimeSplit, SimonIdeal,binarynic} and \cite{HCL3} respectively). 
This map is given in Section~\ref{S:constructions}.
 We then see that the rings associated to binary quartic forms have monogenic cubic resolvents, and then that any quartic ring with a monogenic cubic resolvent is associated to some binary quartic form.  We then in Section~\ref{S:GLaction} study how the $\GL_2(\Z)$-action on binary quartic forms changes the associated quartic ring, which allows us to prove our main result (Theorem~\ref{T:Main}) in Section~\ref{S:Main}, after recording some preliminaries about monogenized cubic rings in Section~\ref{S:cubic}.  
In Section~\ref{S:Geom}, we explain the results of this paper from a geometric point of view, and discuss analogs in which the integers are replaced by an arbitrary scheme.  In Section~\ref{S:Canonical}, we see how the $\GL_2(\Z)$ invariants of a binary quartic form are related to the monogenized cubic resolvent ring of the associated quartic ring.

\section{Constructions}\label{S:constructions}
In this section, we give the constructions of rings from forms mentioned in the introduction, as well as the relationship between binary quartic forms and pairs of ternary quadratic forms that will be
the basis of the proof of Theorem~\ref{T:Main}.  A \emph {based $n$-ic ring} is an $n$-ic ring $R$, along with a choice of 
$\zeta_1,\dots,\zeta_{n-1}\in R/\Z$ such that $1,\zeta_1,\dots,\zeta_{n-1}$ is a $\Z$-module basis of $R$ (which clearly does not depend on the lift of $\zeta_i$ to $R$).
If $R$ is an $n$-ic ring, then for $r\in R$, multiplication by $r$ is a linear transformation on the $\Z$-module $R$, and we write $\Tr(r)$ for the trace of that linear transformation.
The discriminant of a $n$-ic ring with $\Z$-module basis $\zeta_i$ is the determinant of the matrix with $i,j$ entry $\Tr(\zeta_i\zeta_j)$ (which is easily seen to not depend on the choice of basis).

\subsection{Construction of a ring from a binary form}\label{S:conbf}
Given a binary $n$-ic form, 
$
f=f_0x^n +f_1 x^{n-1}y +\dots +f_ny^n \quad \textrm{with}\quad f_i\in\Z,
$
such that $f_0\ne 0$, we can form a based $n$-ic ring
$\Rf$ as a subring of $\Q(\beta)/(f_0\beta^n +f_1 \beta^{n-1} +\dots +f_n)$ 
with $\Z$-module basis
 \begin{gather}\label{E:basis}
 \zeta_0=1\\\notag
 \zeta_1=f_0\beta\\\notag
 \zeta_2=f_0\beta^2+f_1\beta\\\notag
 \vdots\\\notag
 \zeta_k=f_0\beta^k+\dots+f_{k-1}\beta\\\notag
 \vdots\\\notag
 \zeta_{n-1}=f_0\beta^{n-1}+\dots+f_{n-2}\beta,
  \end{gather}
as first considered by Birch and Merriman \cite{BM}.
It is shown that $\Rf$ is a ring in \cite[Proposition 1.1]{Naka}.
 We have the discriminant equality 
$\Disc\Rf=\Disc f$
(see, for example, \cite[Proposition 4]{Simon}).
In Section~\ref{S:Geom}, we will recall a less explicit, but more natural geometric construction of $R_f$ that was developed in \cite{binarynic}.
In this paper we use this construction when $n=3,4$.

Given a binary quartic form, $f= f_0 x^4 +f_1 x^3y+f_2x^2y^2+f_3 xy^3 +f_4y^4 $ with $f_i\in\Z$ and $f_0\ne 0$, we can
work out the multiplication table of $\Rf$ explicitly as follows, letting $\zeta_3'=\zeta_3+f_3$:
\begin{align}\label{E:multtable}
\zeta_1^2 &=& -f_1 \zeta_1& + f_0\zeta_2& & & \notag\\
\zeta_1\zeta_2 &=& -f_2 \zeta_1& & +f_0\zeta_3'& -f_0f_3& \notag\\
\zeta_1\zeta_3' &=& & & & -f_0f_4& \\
\zeta_2\zeta_2 &=& -f_3 \zeta_1& -f_2\zeta_2& +f_1\zeta_3'& -f_1f_3-f_0f_4& \notag\\
\zeta_2\zeta_3' &=& -f_4 \zeta_1& & & -f_1f_4&\notag\\
(\zeta_3')^2&=& & -f_4\zeta_2& +f_3\zeta_3'& -f_3^2-f_2f_4&\notag. 
\end{align}            
Even if $f_0=0$, we can use the above multiplication table to construct a based quartic ring $R_f$ from a binary quartic form $f$.  For example, when $f=0$, we see that $R_f=\Z[\zeta_1,\zeta_2,\zeta_3']/(\zeta_1,\zeta_2,\zeta_3')^2$.

Given a binary cubic form, $f= a x^3 +b x^2y+cxy^2+d y^3$ with $a,b,c,d\in\Z$, we let $\omega=-\zeta_1$ and $\theta=-\zeta_2-c$ and have the multiplication table for $\Rf$ as follows:
\begin{align}
 \omega\theta&= -ad \notag\\ 
\omega^2 &= -ac +b\omega -a \theta\\
\theta^2&= -bd+d\omega-c\theta. \notag
\end{align}
Given a based cubic ring $C$ with a $\Z$-module basis $\omega,\theta$ for $C/\Z$, there is a unique choice of lifts of $\omega,\theta$ to $C$ such that $\omega\theta\in \Z$.
Thus a based cubic ring is the same as a cubic ring with choice of $\Z$-module basis $1,\omega,\theta$ such that $\omega\theta\in \Z$.
The construction of $R_f$ from $f$ is equivariant under a $\GL_2(\Z)$ action that we specify here. 
Let $g=\left(\begin{smallmatrix}
                         a & b\\
			c & d
                        \end{smallmatrix}\right)$ be an element of $\GL_2(\Z)$ and $f=F(x,y)$ be a binary cubic form.
Then $g\circ f= \frac{1}{ad-bc}F(ax+cy,bx+dy)$.  If $\omega,\theta$ is a basis of $C/\Z$, then after action by $g$, the new basis of $C/\Z$ is $\omega',\theta'$, where 
$\left[\begin{smallmatrix}
                         \omega'\\
			\theta'
                        \end{smallmatrix}\right]=g\left[\begin{smallmatrix}
                         \omega\\
			\theta
                        \end{smallmatrix}\right]$.  We can now recall the parametrization of cubic rings by binary cubic forms, discovered first by Delone and Faddeev \cite{DF} and given more recently and in language closer to ours by Gan, Gross, and Savin \cite[Proposition 4.2]{GGS}.

\begin{theorem}[c.f. \cite{GGS}]\label{T:cubic}
 The above construction gives a bijection between based cubic rings and binary cubic forms.  
The bijection between based cubic rings and binary cubic forms is equivariant for the above $\GL_2(\Z)$ actions, giving a bijection between cubic rings and $\GL_2(\Z)$-classes of binary cubic forms.
\end{theorem}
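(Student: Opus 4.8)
The plan is to prove the correspondence at the level of structure constants and then upgrade it to the equivariant statement. First I would check that the forward map is well defined: given an arbitrary binary cubic form $f = ax^3 + bx^2y + cxy^2 + dy^3$, the free $\Z$-module $\Z \oplus \Z\omega \oplus \Z\theta$ equipped with the displayed multiplication table is a commutative, associative ring with identity. Commutativity and the identity are built into the table, so the only thing to verify is associativity; for a commutative algebra on generators $\omega,\theta$ this reduces to the vanishing of the two associators $(\omega\omega)\theta - \omega(\omega\theta)$ and $(\omega\theta)\theta - \omega(\theta\theta)$ (every other triple reduces to these by commutativity). Both vanish by a direct substitution, and crucially they vanish for every $(a,b,c,d)$, with no hypothesis that $a \neq 0$; so the map $f \mapsto \Rf$ is defined on all binary cubic forms and produces a based cubic ring with distinguished basis $\zeta_1 = \omega$, $\zeta_2 = \theta$ of $\Rf/\Z$.

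Next I would build the inverse. By the normalization remark preceding the theorem — which follows because replacing $(\omega,\theta)$ by suitable $\Z$-translates uniquely kills the $\omega$- and $\theta$-components of $\omega\theta$ — a based cubic ring is the same datum as a cubic ring $C$ with a $\Z$-basis $1,\omega,\theta$ satisfying $\omega\theta \in \Z$. For such a ring write $\omega\theta = n$, $\omega^2 = \ell + m\omega + p\theta$, and $\theta^2 = \ell' + m'\omega + p'\theta$ with all coefficients in $\Z$. Imposing associativity in the two forms $\omega(\omega\theta) = (\omega^2)\theta$ and $\theta(\omega\theta) = (\theta^2)\omega$ and comparing components yields exactly the relations $n = pm'$, $\ell = -pp'$, and $\ell' = -mm'$, leaving $m,p,m',p'$ as the only free integers. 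Setting $a = -p$, $b = m$, $c = -p'$, $d = m'$ reproduces the displayed table verbatim, so $C = \Rf$ for $f = ax^3 + bx^2y + cxy^2 + dy^3$. Since the four free structure constants correspond bijectively to the four coefficients, this assignment is inverse to the forward map, establishing the bijection between based cubic rings and binary cubic forms.

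Finally, for the equivariant statement I would track how the data transforms under $g = \mx{a & b \\ c & d} \in \GL_2(\Z)$, where now $a,b,c,d$ denote the matrix entries. On the ring side $g$ fixes $C$ and replaces the basis of $C/\Z$ by $\omega' = a\omega + b\theta$, $\theta' = c\omega + d\theta$; re-imposing $\omega'\theta' \in \Z$ only adjusts the lifts and not the classes in $C/\Z$. Computing the structure constants of $C$ in this new basis (quadratic expressions in the entries of $g$ applied to the old constants) and reading off the corresponding form, I would check that the result is precisely $\frac{1}{\det g} F(ax + cy, bx + dy)$. Conceptually the $\det(g)^{-1}$ twist arises because the form is intrinsically the cubic map $\bar x \mapsto \bar x \wedge \overline{x^2}$ from $C/\Z$ to $\wedge^2(C/\Z)$: under $g$ the source transforms in the standard representation while the target $\wedge^2(C/\Z)$ scales by $\det g$, giving the $\Sym^3 \otimes \det^{-1}$ transformation law of \cite{GGS}. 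Because isomorphic cubic rings are exactly $\GL_2(\Z)$-orbits of based structures, the bijection then descends to one between cubic rings and $\GL_2(\Z)$-classes of binary cubic forms.

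The mechanical parts — the forward associativity check and the inverse extraction of $(a,b,c,d)$ — are routine once the structure constants are written out. I expect the main obstacle to be the equivariance computation: correctly carrying out the quadratic change of structure constants under $g$, accounting for the re-normalization of $(\omega',\theta')$, and pinning down the source of the $\det(g)^{-1}$ factor so that the transformed form matches $\frac{1}{\det g} F(ax+cy,bx+dy)$ on the nose.
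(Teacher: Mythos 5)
Your proposal is correct, but there is essentially nothing in the paper to compare it against: Theorem~\ref{T:cubic} is stated as a known result (``c.f.~\cite{GGS}''), attributed to Delone--Faddeev \cite{DF} and to Gan--Gross--Savin \cite[Proposition 4.2]{GGS}, and the paper gives no proof of it. What you have written is, in effect, the standard argument from those references, and your details check out. The forward direction is right: for a commutative algebra on $\Z\oplus\Z\omega\oplus\Z\theta$, commutativity alone kills the associators of the triples $(\omega,\omega,\omega)$, $(\theta,\theta,\theta)$, $(\omega,\theta,\omega)$, $(\theta,\omega,\theta)$ and identifies the remaining ones up to sign with the two you name, and both of those vanish identically in $(a,b,c,d)$. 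The inverse direction is also right: with $\omega\theta=n$, $\omega^2=\ell+m\omega+p\theta$, $\theta^2=\ell'+m'\omega+p'\theta$, associativity yields $n=pm'$, $\ell=-pp'$, $\ell'=-mm'$ plus two constant-term equations $mn+p\ell'=0$ and $m'\ell+p'n=0$ that are consequences of the first three, so $(a,b,c,d)=(-p,m,-p',m')$ genuinely inverts the construction. Finally, the equivariance step you flag as the main obstacle is already settled by your own conceptual remark, so no brute-force computation is needed: the map $C/\Z\to\wedge^2(C/\Z)$, $\bar\xi\mapsto\bar\xi\wedge\overline{\xi^2}$, is independent of the choice of lift (changing $\xi$ by $n\in\Z$ changes $\overline{\xi^2}$ by $2n\bar\xi$, which dies in the wedge), and in coordinates $\xi=x\omega+y\theta$ one computes $\bar\xi\wedge\overline{\xi^2}=-f(x,y)\,\omega\wedge\theta$. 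Since $\omega'\wedge\theta'=\det(g)\,\omega\wedge\theta$ and $x=ax'+cy'$, $y=bx'+dy'$, the form attached to the new basis is $\frac{1}{\det g}F(ax'+cy',bx'+dy')$, which is exactly the twisted action defined before the theorem; the descent from based objects to isomorphism classes versus $\GL_2(\Z)$-orbits is then immediate, since any isomorphism of cubic rings carries one basis of $C/\Z$ to another.
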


\subsection{Construction of a quartic ring and cubic resolvent from a pair of ternary quadratic forms}
We can represent a pair of ternary quadratic forms by a pair of matrices $(A,B)$
such that
$$
A=\begin{pmatrix} a_{11} & \frac{a_{12}}{2} & \frac{a_{13}}{2} \\
\frac{a_{12}}{2} & a_{22} & \frac{a_{23}}{2} \\
\frac{a_{13}}{2} & \frac{a_{23}}{2} & a_{33} 
\end{pmatrix}
\qquad 
B=\begin{pmatrix} b_{11} & \frac{b_{12}}{2} & \frac{b_{13}}{2} \\
\frac{b_{12}}{2} & b_{22} & \frac{b_{23}}{2} \\
\frac{b_{13}}{2} & \frac{b_{23}}{2} & b_{33} 
\end{pmatrix}
$$
with $a_{ij},b_{ij}\in \Z$.
Bhargava, in \cite[Section 3.2]{HCL3}, constructed a based quartic ring $Q$ from $(A,B)$, by giving the multiplication table explicitly in terms of the $a_i$ and $b_i$.  See also
Section~\ref{S:Geom} for a geometric description of this quartic ring that was developed in \cite{Quartic}.
We define the \emph{determinant} of the pair $(A,B)$ to be the binary cubic form $4\Det(Ax-By)$.
The based cubic resolvent associated to a pair $(A,B)$ is given by this determinant binary cubic form via Theorem~\ref{T:cubic}.

We have a $\GL_3(\Z)$ action on pairs of ternary quadratic forms given by $(A,B)\mapsto (gAg^t, gBg^t)$ for $g\in\GL_3(\Z)$.
The action of $g=\left(\begin{smallmatrix}
                         a & b\\
			c & d
                        \end{smallmatrix}\right)\in\GL_2(\Z)$ on a pair $(A,B)$ of ternary quadratic forms takes
$(A,B)$ to $(aA+bB,cA+dB)$.
In Bhargava's construction of a quartic ring $Q$ and cubic resolvent from a pair of ternary quadratic forms, the $\GL_2(\Z)\times\GL_3(\Z)$ action on the pair of ternary quadratic forms corresponds to changing the basis of $Q$ and its cubic resolvent, and thus giving the map in Theorem~\ref{T:B} (see \cite{HCL3}).

\subsection{Construction of a pair of ternary quadratic forms from a binary quartic form}
The new construction of this paper is the map
$$
\Psi : \mapin{binary quartic forms}{pairs of ternary quadratic forms}.
$$
which sends $f=f_0 x^4 +f_1 x^3y+f_2x^2y^2+f_3 xy^3 +f_4y^4$ to $(A_0,B_f)$, where
$$
A_0=\begin{pmatrix} 0 & \frac{-1}{2} & 0 \\
\frac{-1}{2} & 0 & 0 \\
0 & 0 & 1 
\end{pmatrix}\quad\text{and}\quad
B_f=\begin{pmatrix} f_4 & 0 & \frac{f_3}{2} \\
0 & f_0 & \frac{f_1}{2} \\
\frac{f_3}{2} & \frac{f_1}{2} &  f_2
\end{pmatrix}
$$
See Section~\ref{SS:geomZ} for a geometric explanation of why we these particular forms arise.
One then naturally puts an equivalence on pairs of ternary quadratic forms
such that $(A,B)\sim (A, B +nA)$ for $n\in \Z$, and we can also consider
$$
\bar{\Psi} : \mapin{binary quartic forms}{pairs of ternary quadratic forms$/\sim$}.
$$
which sends $f$ to $(A_0,B_f + \Z A_0)$.
Note that $\bar{\Psi}$ is injective and its image is all classes of pairs $(A_0,B)$.

We have seen above that the binary quartic form $f$ gives a based quartic
ring $R_f$ over \Z, and a pair of ternary quadratic forms
gives a based quartic ring $Q$ and a based cubic resolvent $C$ for that quartic ring.  
The map $\Psi$ may look straightforward, or even arbitrary, but it has been carefully constructed so that $\Psi$
(and also $\bar{\Psi}$) respects the above constructions of based quartic rings and so as to satisfy Theorem~\ref{T:action}, which says that the map is $\GL_2(\Z)$ equivariant.

\begin{lemma}\label{L:multable}
 If a based quartic ring $Q$ is associated to the pair $\Psi(f)=(A_0,B_f)$ or any element of the class
$\bar{\Psi}(f)=(A_0,B_f + \Z A_0)$, then $R_f=Q$.  
\end{lemma}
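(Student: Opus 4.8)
The plan is to prove Lemma~\ref{L:multable} by a direct computation: I would unwind Bhargava's explicit construction of the based quartic ring $Q$ attached to the pair $(A_0, B_f)$ and check that the resulting multiplication table coincides, basis vector for basis vector, with the multiplication table~\eqref{E:multtable} that defines $R_f$. Since the claim asserts equality of \emph{based} quartic rings (not merely an abstract isomorphism), and both rings already come with distinguished $\Z$-module bases $1, \zeta_1, \zeta_2, \zeta_3'$, it suffices to verify that the structure constants agree. Concretely, I would feed the specific entries $a_{11}=a_{22}=a_{33}^{-1}\cdot 0$-type values from $A_0$ and the entries $f_0,\dots,f_4$ from $B_f$ into the formulas of \cite[Section 3.2]{HCL3} for the products $\zeta_i\zeta_j$, and then match the six products against the six displayed lines of~\eqref{E:multtable}.

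Before grinding the computation, I would first handle the $\bar\Psi$-invariance claim, since it makes the rest cleaner: the assertion is that replacing $B_f$ by $B_f + n A_0$ for $n\in\Z$ yields the same based quartic ring. I would check that Bhargava's construction depends on the pair $(A,B)$ only through the class modulo the relation $(A,B)\sim(A,B+nA)$; this is essentially the statement that translating $B$ by an integer multiple of $A$ corresponds to the $\GL_2(\Z)$-type shear $\left(\begin{smallmatrix} 1 & 0 \\ n & 1\end{smallmatrix}\right)$ acting on the pair, which changes the \emph{cubic resolvent} basis but not the quartic ring $Q$ itself. Having reduced to a single representative, I would then carry out the structure-constant comparison once, for $(A_0, B_f)$.

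The key steps in order are: (1) record the precise form of Bhargava's multiplication-table formulas specialized to $A = A_0$, noting that the special shape of $A_0$ (with its single nonzero diagonal entry $a_{33}=1$ and the off-diagonal $-1/2$ in the $(1,2)$ slot) makes many of the structure constants vanish or simplify; (2) read off the six products $\zeta_1^2, \zeta_1\zeta_2, \zeta_1\zeta_3', \zeta_2^2, \zeta_2\zeta_3', (\zeta_3')^2$ from those formulas; and (3) confirm term-by-term agreement with~\eqref{E:multtable}, including the constant terms such as $-f_0 f_3$, $-f_0 f_4$, $-f_1 f_3 - f_0 f_4$, and $-f_3^2 - f_2 f_4$. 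Because both tables are already written in the same basis and the shift $\zeta_3' = \zeta_3 + f_3$ has been built in, the matching should be forced once the specialization is done correctly.

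The main obstacle is bookkeeping rather than conceptual: Bhargava's general formulas for the quartic-ring structure constants are notationally heavy (they involve the $a_{ij}, b_{ij}$ and certain $2\times 2$ minors), and the risk is a sign error or a factor of $2$ arising from the convention that off-diagonal matrix entries carry a factor $1/2$. I expect the cleanest route is to exploit the sparseness of $A_0$ to kill most of the terms early, so that only a handful of genuinely nonzero products must be tracked. A useful consistency check, which I would invoke to catch errors, is the discriminant identity: Bhargava's construction satisfies $\Disc Q = \Disc(A_0,B_f)$, while $\Disc R_f = \Disc f$ by the construction in Section~\ref{S:conbf}, so I would verify that $\Disc(A_0,B_f)$, computed as the discriminant of the binary cubic $4\Det(A_0 x - B_f y)$, equals $\Disc f$; agreement of discriminants, together with matching structure constants on the explicit basis, nails down $R_f = Q$ as based rings.
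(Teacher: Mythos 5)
Your plan is, in essence, the paper's own proof: the paper verifies the lemma by specializing Bhargava's explicit multiplication table \cite[Equations (21) and (23)]{HCL3} to the pair $(A_0,B_f)$ and comparing it line by line with \eqref{E:multtable}. Your preliminary reduction modulo the relation $(A,B)\sim(A,B+nA_0)$ is also sound, since $\left(\begin{smallmatrix}1&0\\n&1\end{smallmatrix}\right)\in\SL_2(\Z)$ and Bhargava's structure constants for the quartic ring are $\SL_2$-invariant (they are $2\times 2$ minors formed from pairs of coefficients of $A$ and $B$), so translating $B_f$ by $\Z A_0$ changes only the basis of the cubic resolvent, not the based quartic ring.

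However, one step, as you state it, would fail. You assert that ``both tables are already written in the same basis'' and propose to match the products ``basis vector for basis vector.'' The correspondence between Bhargava's basis $1,\alpha_1,\alpha_2,\alpha_3$ for the ring attached to $(A_0,B_f)$ and the basis of $R_f$ is \emph{not} the identity-order one; the entire content of the paper's short proof is the relabeling $\alpha_1=\zeta_3'$, $\alpha_2=\zeta_1$, $\alpha_3=\zeta_2$. If you instead match $\alpha_1\leftrightarrow\zeta_1$, $\alpha_2\leftrightarrow\zeta_2$, $\alpha_3\leftrightarrow\zeta_3'$, the structure constants do not agree (for instance, $\zeta_1^2=-f_1\zeta_1+f_0\zeta_2$ has no constant term and no $\zeta_3'$-component, whereas the product $\alpha_1^2$ computed from $(A_0,B_f)$ is the one matching $(\zeta_3')^2=-f_4\zeta_2+f_3\zeta_3'-f_3^2-f_2f_4$), and the naive check would wrongly conclude $R_f\neq Q$. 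This permutation is forced by how $\Psi$ was designed: $B_f$ places $f_0$ at the entry $b_{22}$ and $f_4$ at $b_{11}$, so the roles of the three coordinates are cyclically shuffled relative to the naive ordering of the $\zeta_i$. Once you build this relabeling into the comparison, the term-by-term verification is exactly the paper's proof; your discriminant cross-check is a reasonable sanity test but is not needed for the argument.
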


\begin{proof}
 We have a basis $\zeta_1,\zeta_2,\zeta_3$  of the quartic ring associated to a binary quartic form as given in Section~\ref{S:conbf}.  
We let $\zeta_3'=\zeta_3+f_3$. 
We let $\alpha_1=\zeta_3'$ and $\alpha_2=\zeta_1$ and $\alpha_3=\zeta_2$.
We then see from Equation~\eqref{E:multtable} that the $\alpha_i$ satisfy the multiplication table given in \cite[Equations (21) and (23)]{HCL3} for the pair $(A_0,B_f)$.
\end{proof}

Note that
when we have based rings, it makes
sense to talk about equality and not just isomorphism. 
All of the elements in the class $(A_0,B_f + \Z A_0)$
give the same based quartic ring and the same cubic resolvent, but with different bases
for the cubic resolvent.

Since $4\det(A_0)=-1$, any element of $\bar{\Psi}(f)$ has a based cubic ring
given by a cubic form with coefficient $-1$ of $x^3$.  In particular, the cubic resolvent ring $C$ associated to $\Psi(f)$
has $\Z$-module basis $1,\omega,\theta$ with $\omega^2 = -c + b\omega + \theta$
with $b,c\in\Z$, and thus $1,\omega,\omega^2$ is a $\Z$-module basis
of $C$.  
A \emph{monogenized cubic ring} is a cubic ring $C$ and an element 
$\omega\in C/\Z$ such that $C=\Z[\omega]$.  
An isomorphism of monogenized cubic rings must preserve the chosen element of $C/\Z$.
A \emph{monogenized based cubic ring} is a based cubic ring $C$
with basis $1,\omega,\theta$,
such that $1,\omega,\omega^2$ is a $\Z$-module basis for the ring
\emph{of the same orientation as $1,\omega,\theta$} (in other words, such that $\theta \in \omega^2 + \omega\Z +\Z$), or equivalently a based cubic ring $C$
that corresponds to a binary cubic form with
$x^3$ coefficient $-1$.

\begin{proposition}
 Any element of $\bar{\Psi}(f)$ corresponds to a quartic ring with a monogenized based cubic
 resolvent.
\end{proposition}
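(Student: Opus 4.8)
The plan is to read off the based cubic resolvent directly from the determinant binary cubic form and then to check that this form always has leading coefficient $-1$. Recall from Section~\ref{S:constructions} that to a pair $(A,B)$ of ternary quadratic forms Bhargava's construction associates a based quartic ring $Q$, and that its based cubic resolvent $C$ is the based cubic ring assigned by Theorem~\ref{T:cubic} to the determinant binary cubic form $4\Det(Ax-By)$. By the second characterization in the definition of a monogenized based cubic ring (namely, that it corresponds to a binary cubic form with $x^3$-coefficient $-1$), it therefore suffices to show that for every representative of $\bar{\Psi}(f)$ the associated determinant cubic form has $x^3$-coefficient $-1$.

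First I would reduce to an elementary determinant computation. A general element of the class $\bar{\Psi}(f)=(A_0,B_f+\Z A_0)$ has the form $(A_0,B_f+nA_0)$ with $n\in\Z$; in particular its first entry is always the fixed form $A_0$. The $x^3$-coefficient of $4\Det(A_0 x-(B_f+nA_0)y)$ is obtained by setting $y=0$, which gives $4\Det(A_0 x)=4x^3\det(A_0)$, so this coefficient equals $4\det(A_0)$ and is independent of $n$ (and of $B_f$ altogether). Expanding the $3\times 3$ matrix $A_0$ along its last row, whose only nonzero entry is the lower-right $1$, yields $\det(A_0)=-\tfrac14$, and hence $4\det(A_0)=-1$.

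Combining these observations, the determinant cubic form attached to any representative of $\bar{\Psi}(f)$ has $x^3$-coefficient $-1$, so by the definition its associated based cubic ring is a monogenized based cubic ring; since this based cubic ring is exactly the based cubic resolvent $C$ of the quartic ring $Q$ produced from that representative, every element of $\bar{\Psi}(f)$ corresponds to a quartic ring with a monogenized based cubic resolvent. (Different representatives give the same underlying cubic resolvent in different bases, and the argument above confirms that each of these bases is monogenized.) The only genuine computation is the $3\times 3$ determinant, so I anticipate no real obstacle; the substantive point is simply that the first component of every representative is the single fixed form $A_0$, which forces the leading coefficient of the resolvent cubic to be $-1$.
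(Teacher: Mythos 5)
Your proof is correct and matches the paper's own argument: the paper likewise observes that $4\det(A_0)=-1$, so the determinant cubic form of any representative $(A_0,B_f+nA_0)$ has $x^3$-coefficient $-1$, which by the stated equivalence in the definition is exactly what it means for the based cubic resolvent to be monogenized. Your explicit remarks that the leading coefficient is independent of $n$ and of $B_f$ are just slightly more spelled-out versions of the same computation.
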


\begin{corollary}
 Any ring $R_f$ from a binary quartic form has a monogenic cubic resolvent.
\end{corollary}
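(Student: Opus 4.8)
The plan is to deduce the Corollary immediately from the preceding Proposition together with Lemma~\ref{L:multable}. First I would recall that by Lemma~\ref{L:multable}, the based quartic ring associated (via Bhargava's construction in Theorem~\ref{T:B}) to the pair $\Psi(f)=(A_0,B_f)$ is exactly $R_f$. Since $\Psi(f)$ is a representative of the class $\bar{\Psi}(f)$, the Proposition applies and tells us that $\Psi(f)$ corresponds to the quartic ring $R_f$ equipped with a cubic resolvent $C$ that is a monogenized based cubic ring.

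The remaining step is the purely formal observation that a monogenized based cubic ring is in particular monogenic in the sense required by the statement. By the definition given above, a monogenized based cubic ring $C$ has a $\Z$-module basis $1,\omega,\theta$ with $\theta\in\omega^2+\omega\Z+\Z$, so that $1,\omega,\omega^2$ is again a $\Z$-module basis of $C$. Hence $C=\Z[\omega]$ is generated by the single element $\omega$ as a $\Z$-algebra, which is exactly what it means for the cubic ring $C$ to be monogenic. Therefore $R_f$ admits the monogenic cubic resolvent $C$, and forgetting the choice of basis $\theta$ (equivalently, the chosen generator of $C$ modulo $\Z$) yields the desired conclusion.

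I do not expect any genuine obstacle here, since all of the substantive content has already been established. The Proposition does the real work, exploiting the numerical fact that $4\det(A_0)=-1$, which forces the determinant binary cubic form $4\Det(A_0x-B_fy)$ to have $x^3$-coefficient $-1$ and hence the associated cubic resolvent to be monogenized; meanwhile Lemma~\ref{L:multable} identifies the quartic ring attached to $\Psi(f)$ with $R_f$. The only points requiring care are the bookkeeping that the cubic resolvent produced by the Proposition is genuinely a resolvent of the quartic ring $R_f$ (which is immediate once the quartic rings are identified, since Bhargava's correspondence attaches $C$ to $Q$ as a pair), and the elementary implication that ``monogenized based'' yields ``monogenic.''
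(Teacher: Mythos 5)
Your proposal is correct and matches the paper's (implicit) argument exactly: the paper derives this Corollary immediately from the preceding Proposition together with Lemma~\ref{L:multable}, the Proposition itself resting on the observation that $4\Det(A_0)=-1$ forces the determinant cubic form to have $x^3$-coefficient $-1$. Your added bookkeeping---that ``monogenized based'' implies ``monogenic'' since $1,\omega,\omega^2$ is a $\Z$-module basis, so $C=\Z[\omega]$---is precisely the intended (and correct) final step.
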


We will see that the converse is also true.

\begin{theorem}\label{T:surjective}
If a quartic ring $Q$ has a monogenic resolvent $C$, then there
exist bases of $Q$ and $C$ such that the based pair $(Q,C)$ corresponds
to $(A_0,B)$ in the parametrization of quartic rings and cubic resolvents.
\end{theorem}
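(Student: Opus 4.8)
The plan is to start from Bhargava's parametrization, Theorem~\ref{T:B}. Since $C$ is a cubic resolvent of $Q$, the pair $(Q,C)$ corresponds, for a suitable choice of bases, to a pair of ternary quadratic forms $(A,B)$, well-defined up to the $\GL_3(\Z)\times\GL_2(\Z)$-action. The goal is to move this pair, by a further change of the same bases, to a representative of the shape $(A_0,B)$. I would proceed in two stages: first use the $\GL_2(\Z)$-action together with the monogenicity of $C$ to arrange $4\Det A=-1$, and then use the $\GL_3(\Z)$-action to bring $A$ to exactly $A_0$.

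For the first stage, recall that the based cubic resolvent attached to $(A,B)$ corresponds via Theorem~\ref{T:cubic} to the binary cubic form $4\Det(Ax-By)$, whose $x^3$-coefficient is $4\Det A$. Because $C$ is monogenic, I may pick a generator and the associated based structure making $C$ a monogenized based cubic ring; by the discussion preceding the statement, such a ring is exactly one whose binary cubic form has $x^3$-coefficient $-1$. As the $\GL_2(\Z)$-action on $(A,B)$ is equivariant for the corresponding action on based cubic resolvents, I can choose the representative so that $4\Det(Ax-By)$ has $x^3$-coefficient $-1$, that is, $4\Det A=-1$.

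The heart of the proof is the second stage: any half-integral symmetric $A$ with $4\Det A=-1$ is $\GL_3(\Z)$-equivalent to $A_0$. Note first that the $\GL_3(\Z)$-action does not disturb the first stage, since $\Det(gAg^t x-gBg^t y)=(\Det g)^2\,\Det(Ax-By)$ and $\Det g=\pm1$. I would argue through the ternary quadratic form $q(v)=v^tAv$ on $\Z^3$, of determinant $-1/4$. A Hermite--Minkowski estimate rules out a definite form of such small determinant, so $q$ is indefinite of signature $(2,1)$, matching $A_0$. I would next show $q$ is isotropic over $\Q$: over $\R$ it is indefinite, at every odd prime it is unimodular and hence isotropic, and at the prime $2$ isotropy follows from the integral symmetric matrix $2A$ being an even lattice of determinant $-2$, for which the $2$-adically anisotropic form is excluded by a short discriminant-form computation. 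By Hasse--Minkowski, $q$ then has a primitive integral zero $v$. Completing $v$ to a basis, the functional $w\mapsto 2v^tAw$ has content $d$ with $d^2\mid\Det(2A)=-2$, so $d=1$ since $-2$ is squarefree; hence I may choose $w$ pairing to $1$ with $v$ and, after correcting $w$ by a multiple of $v$ to make $q(w)=0$, split off a unimodular hyperbolic plane. The rank-one orthogonal complement carries a form $\langle c\rangle$ with $-2c=\Det(2A)=-2$, forcing $c=1$; hence $q$ is $\GL_3(\Z)$-equivalent to $z^2-xy$, which is precisely the form of $A_0$.

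I expect the isotropy at $2$ to be the main obstacle, since the determinant alone does not determine a ternary form and one must genuinely exclude the $2$-adic anisotropic form of the same determinant; once this is in hand, the unimodular hyperbolic splitting is forced by $\Det(2A)=-2$. Finally, because every transformation used lies in $\GL_3(\Z)\times\GL_2(\Z)$, it is induced by a change of basis of $Q$ and of $C$ through the equivariance in Theorem~\ref{T:B}; thus the based pair $(Q,C)$ now corresponds to $(A_0,B)$ for the resulting $B$, as required.
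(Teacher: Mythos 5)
Your proposal is correct, and its first stage---using monogenicity to choose a monogenized basis of $C$, so that the $\GL_2(\Z)$-part of the action arranges $4\Det A=-1$---is exactly the paper's. Where you genuinely diverge is the key step: the paper does not prove that all half-integral ternary forms of determinant $-1/4$ are equivalent; it simply cites this as a classical class-number-one fact (Flath, Chapter 5, Theorem 6.3) for the $\SL_3(\Z)$-action, and its whole proof is a few lines. You instead reprove that fact: Hermite's bound forces indefiniteness, local isotropy plus Hasse--Minkowski produces a primitive isotropic vector, the content argument ($d^2\mid\Det(2A)=-2$, squarefree) splits off a unimodular hyperbolic plane, and the determinant pins the rank-one complement to $\langle 1\rangle$. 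These steps are sound, including the divisibility claim (in a basis extending $v$, every monomial of $\Det(2A)$ uses one entry from the first row and one from the first column, and the $(1,1)$ entry is $0$). The one place you only sketch---isotropy over $\Q_2$---is true and fillable: the even unimodular Jordan block of $2A$ over $\Z_2$ must have rank $2$, hence is $\left(\begin{smallmatrix}0&1\\1&0\end{smallmatrix}\right)$ or $\left(\begin{smallmatrix}2&1\\1&2\end{smallmatrix}\right)$, and either way the full form is isotropic; but you could also bypass your ``main obstacle'' entirely, since by Hilbert reciprocity a ternary form over $\Q$ is anisotropic at an even number of places, so isotropy over $\R$ and over all odd $\Q_p$ already forces isotropy over $\Q_2$. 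As for what each approach buys: the paper's citation keeps the argument short and leans on the reduction-theory literature, while yours is self-contained and explains \emph{why} the class number is one. Your conclusion is also not weaker for the theorem's purposes: $\GL_3(\Z)$-equivalence suffices, since any $\GL_3(\Z)$ change of basis of $Q$ is permitted in the based correspondence, and composing with the determinant $-1$ automorphism of $A_0$ that swaps the two isotropic coordinates upgrades your equivalence to an $\SL_3(\Z)$ one, matching the paper's statement.
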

\begin{proof}
Recall that $g\in \SL_3(\Z)$ acts on $A$ by sending it to
$g A g^t$. 
There is only one $\SL_3(\Z)$ class of ternary
quadratic forms with determinant $-1/4$.
This is a classical fact from the theory of ternary quadratic forms, see e.g. \cite[Chapter 5, Theorem 6.3]{Fla}.
   Then we can conclude that all 
such forms are in the $\SL_3(\Z)$ class of $A_0$.
If we have a pair $(A,B)$ corresponding to a quartic ring $Q$
with a monogenic cubic resolvent $C$, we can choose a monogenized basis of $C$
 so that we can assume $\Det(A)=-1/4$.
Then we can act by an element $g\in \SL_3(\Z)$
so that we obtain $A=A_0$.
\end{proof}

\begin{corollary}
 All quartic rings with monogenic resolvents are the ring $R_f$ constructed from some
binary quartic form $f$.
\end{corollary}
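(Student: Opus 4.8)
The plan is to deduce the corollary by feeding the output of Theorem~\ref{T:surjective} into Lemma~\ref{L:multable}, using the explicit description of the image of $\bar\Psi$ recorded just after its definition. Concretely, I would start with an arbitrary quartic ring $Q$ carrying a monogenic cubic resolvent $C$. By Theorem~\ref{T:surjective} I may choose bases of $Q$ and $C$ so that the based pair $(Q,C)$ corresponds, under Bhargava's parametrization (Theorem~\ref{T:B}), to a pair of integral ternary quadratic forms of the special shape $(A_0,B)$ for some $B$ with $b_{ij}\in\Z$.

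Next I would observe that every such pair $(A_0,B)$ already lies in the image of $\bar\Psi$, which by the remark following the definition of $\bar\Psi$ consists of all $\sim$-classes of pairs whose first form is $A_0$. To make this explicit, I would solve $B+nA_0=B_f$ for a binary quartic form $f$ and an integer $n$: comparing entries forces $f_4=b_{11}$, $f_0=b_{22}$, $f_3=b_{13}$, $f_1=b_{23}$, $n=b_{12}$, and then $f_2=b_{33}+b_{12}$, all of which are integers since $B$ is integral. Hence the pair $(A_0,B)$ is an element of the class $\bar\Psi(f)=(A_0,B_f+\Z A_0)$ for this choice of $f$.

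Finally I would invoke Lemma~\ref{L:multable}: because the based quartic ring $Q$ is associated to a representative of $\bar\Psi(f)$, the lemma yields $R_f=Q$ as based quartic rings, and in particular $Q$ is the ring $R_f$ attached to the binary quartic form $f$. This is exactly the assertion of the corollary.

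The real content has already been expended in Theorem~\ref{T:surjective} (normalizing $A$ to $A_0$ via the uniqueness of the $\SL_3(\Z)$-class of ternary quadratic forms of determinant $-1/4$) and in Lemma~\ref{L:multable}, so I expect this corollary to be essentially bookkeeping rather than to present a genuine obstacle. The one point deserving care is compatibility of conventions: I must check that the phrase ``$(Q,C)$ corresponds to $(A_0,B)$'' in Theorem~\ref{T:surjective} refers to the \emph{same} association of a based quartic ring to a pair of ternary quadratic forms that Lemma~\ref{L:multable} presupposes. This is immediate from the fact that Bhargava's parametrization is a bijection on based objects, so the based ring recovered from $(A_0,B)$ is literally $Q$, and no reconciliation of orientations or bases is needed beyond what Theorem~\ref{T:surjective} already supplies.
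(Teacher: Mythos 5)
Your proposal is correct and follows exactly the route the paper intends: the paper states this corollary without proof because it is immediate from Theorem~\ref{T:surjective}, the remark that the image of $\bar{\Psi}$ consists of all classes of pairs $(A_0,B)$, and Lemma~\ref{L:multable}. Your explicit solution of $B+nA_0=B_f$ (with $n=b_{12}$, $f_2=b_{33}+b_{12}$, etc.) just spells out the bookkeeping behind that remark, and it checks out.
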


In Section~\ref{S:GLaction} we see how the $\GL_2(\Z)$ action on binary quartic forms interacts with the construction $\Psi$.

\section{$\GL$ action on forms}\label{S:GLaction}
There is a natural (left) $\GL_2(\Z)$ action on binary quartic forms.  Let  $g=\left(\begin{smallmatrix}
                         a & b\\
			c & d
                        \end{smallmatrix}\right)$ be an element of $\GL_2(\Z)$ and $f=F(x,y)$ be a binary quartic form.
Then $g\circ f= F(ax+cy,bx+dy)$.  Note that this action has a kernel of $\pm 1$.
Recall that the $\GL_3(\Z)$ action on pairs of ternary quadratic forms is given by $(A,B)\mapsto (gAg^t, gBg^t)$ for $g\in\GL_3(\Z)$.

\begin{theorem}\label{T:action}
 The map 
$$
\map{\rho: \GL_2 (\Z)}{\SL_3 (\Z)}{\left(\begin{matrix}
                         a & b\\
			c & d
                        \end{matrix}\right)
}{\frac{1}{ad-bc}\left(\begin{matrix}
                         d^2 & c^2 & dc\\
			b^2 & a^2 &  ab \\
			2bd & 2ac & ad+bc
                        \end{matrix}\right)
}
$$
is a homomorphism, and gives  a $\GL_2(\Z)$ action on pairs of ternary quadratic forms for which $\bar{\Psi}$ is equivariant, i.e.
for $g\in\GL_2(\Z)$ we have $\bar{\Psi}(g\circ f)=g\bar{\Psi}(f)$. 
We have $\im(\rho)\subset \Stab(A_0)$. 
\end{theorem}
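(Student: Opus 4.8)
The plan is to base the entire argument on a single linear-algebraic identity tying $\rho(g)$ to the Veronese (squaring) map, and then read off all three assertions as consequences. Write $g=\mx{a & b \\ c & d}$, set $\delta=\det g=ad-bc$, and let $N(g)=\delta\,\rho(g)$ be the integral matrix displayed in the statement, so $\rho(g)=\delta^{-1}N(g)$. Let $\ell(x,y)=(x^2,y^2,xy)^t$ be the Veronese vector, and for a symmetric $3\times3$ matrix $B$ write $\phi(B)(x,y)=\ell(x,y)^t\,B\,\ell(x,y)$ for the binary quartic obtained by the substitution $(u,v,w)=(x^2,y^2,xy)$. The matrix $B_f$ is built precisely so that $\phi(B_f)(x,y)=f(y,x)$ (the reversal of $f$), which I would confirm by direct substitution and which in particular makes $\phi$ a surjection onto the space of binary quartics. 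The key step is then to verify, by a short three-line computation on the components, the identity
\[
N(g)^t\,\ell(x,y)=\ell(dx+by,\ cx+ay).
\]
Equivalently $N(g)^t=\Sym^2(m(g))$, the second symmetric power of $m(g)=\mx{d & b \\ c & a}$, since a linear map is pinned down by its values on the spanning vectors $\ell(1,0),\ell(0,1),\ell(1,1)$.

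From this identity the homomorphism and $\SL_3$ claims are formal. A one-line check shows $g\mapsto m(g)$ is an anti-homomorphism, so by functoriality of $\Sym^2$ we get $N(g_1g_2)^t=\Sym^2(m(g_2)m(g_1))=N(g_2)^t N(g_1)^t$, i.e. $N$, and hence $\rho=\delta^{-1}N$, is a homomorphism. Since $\det\Sym^2(M)=(\det M)^3$ and $\det m(g)=\delta$, we obtain $\det\rho(g)=\delta^{-3}\delta^3=1$, so $\rho$ maps into $\SL_3$; over $\GL_2(\Z)$ we have $\delta=\pm1$, whence $\rho(g)=\pm N(g)$ is integral. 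Composing $\rho$ with the given $\GL_3(\Z)$-action on pairs then defines the asserted $\GL_2(\Z)$-action on pairs of ternary quadratic forms.

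For the stabilizer claim I would use that $\phi(A_0)(x,y)=(xy)^2-x^2y^2=0$, so $\langle A_0\rangle$ is the (one-dimensional) kernel of the surjection $\phi$ from the $6$-dimensional space of ternary quadratic forms onto the $5$-dimensional space of binary quartics. The Veronese identity gives, for any $B$, the transformation rule $\phi(\rho(g)B\rho(g)^t)(x,y)=\delta^{-2}\phi(B)(dx+by,cx+ay)$; applied to $B=A_0$ this yields $\phi(\rho(g)A_0\rho(g)^t)=0$, so $\rho(g)A_0\rho(g)^t=\chi(g)A_0$ for some scalar $\chi(g)$. Taking determinants and using $\det\rho(g)=1$ forces $\chi(g)^3=1$ with $\chi(g)$ real, hence $\chi(g)=1$ and $\im(\rho)\subset\Stab(A_0)$.

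The equivariance of $\bar\Psi$ is the substantive step, and I expect it to be the main obstacle. Recalling $\phi(B_f)(x,y)=f(y,x)$, the transformation rule gives
\[
\phi(\rho(g)B_f\rho(g)^t)(x,y)=\delta^{-2}f(cx+ay,\ dx+by),
\]
while $\phi(B_{g\circ f})(x,y)=(g\circ f)(y,x)=f(cx+ay,\ dx+by)$. Over $\GL_2(\Z)$ we have $\delta^{-2}=1$, so the two quartics agree; since $\ker\phi=\langle A_0\rangle$ over $\Q$, this shows $\rho(g)B_f\rho(g)^t\equiv B_{g\circ f}\pmod{\Q A_0}$. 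The delicate final point is to upgrade this to a congruence modulo $\Z A_0$: writing the difference as $\lambda A_0$ and comparing the $(3,3)$-entries, where $A_0$ has entry $1$ and both matrices are integral, forces $\lambda\in\Z$. This yields $\bar\Psi(g\circ f)=g\,\bar\Psi(f)$. Keeping the transpose/reversal/determinant conventions consistent and securing the integral (rather than merely rational) congruence is where the real care is needed; as a safeguard one may instead verify equivariance directly on generators of $\GL_2(\Z)$ such as $\mx{1&1\\0&1}$, $\mx{0&-1\\1&0}$, and $\mx{-1&0\\0&1}$, since $\rho$, the $\GL_3(\Z)$-action, and $\bar\Psi$ are all defined over $\Z$.
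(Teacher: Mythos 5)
Your proposal is correct, but it takes a genuinely different route from the paper's own proof. The paper proves Theorem~\ref{T:action} by direct computation: it observes that $\rho$ is (up to a determinant twist) the representation of $\GL_2(\Z)$ on binary quadratic forms, asserts the homomorphism property and the identity $Y'A_0(Y')^t=A_0$ as formal computations, and checks equivariance of $\bar\Psi$ by a computation that simplifies on the generators $\left(\begin{smallmatrix}0&1\\1&0\end{smallmatrix}\right)$, $\left(\begin{smallmatrix}1&0\\0&-1\end{smallmatrix}\right)$, $\left(\begin{smallmatrix}1&1\\0&1\end{smallmatrix}\right)$ of $\GL_2(\Z)$; crucially, that computation is exact, showing $Y(B+A_0\Z)Y^t=B'+A_0\Z$ on the nose, so no separate integrality step is needed (your suggested ``safeguard'' at the end is in fact precisely the paper's proof). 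You instead derive everything from the single Veronese identity $N(g)^t\,\ell(x,y)=\ell(dx+by,\,cx+ay)$, i.e.\ $N(g)^t=\Sym^2(m(g))$: the homomorphism property and $\det\rho(g)=1$ follow functorially (using that $m$ is an anti-homomorphism and $\det\Sym^2 M=(\det M)^3$), the stabilizer claim follows because $\phi(A_0)=0$ and $\ker\phi=\Q A_0$ force $\rho(g)A_0\rho(g)^t=\chi(g)A_0$ with $\chi(g)^3=1$, hence $\chi(g)=1$, and equivariance follows from the transformation rule modulo $\Q A_0$. This is essentially the algebraization of the paper's own geometric picture from Section~\ref{SS:geomZ} (pulling back conics along the rational normal curve $[u:v]\mapsto[v^2:u^2:uv]$), and what it buys is conceptual transparency and freedom from generator-by-generator case checking; what it costs is exactly the step you flag, upgrading the congruence modulo $\Q A_0$ to one modulo $\Z A_0$. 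Your handling of that step is sound, but the phrase ``both matrices are integral'' is loose: $B_{g\circ f}$ and $\rho(g)B_f\rho(g)^t$ are only half-integral (Gram matrices of integer-coefficient forms), yet their diagonal entries are integers --- the $(3,3)$ entry of $\rho(g)B_f\rho(g)^t$ is the value of the integer-coefficient form $B_f$ at the integer vector given by the third row of $\rho(g)$ --- and since $A_0$ has $(3,3)$ entry $1$, this does force $\lambda\in\Z$ as you claim.
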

\begin{proof}
 It is easy to compute that $\rho$ is a homomorphism, and it can also can be realized as the representation of $\GL_2(\Z)$ on
binary quadratic forms (up to a twist by the determinant).  We can check the equivariance of $\bar{\Psi}$ 
 by computation (which simplifies on generators 
$\left(\begin{smallmatrix}
                         0 & 1\\
			1 & 0
                        \end{smallmatrix}\right)$,
$\left(\begin{smallmatrix}
                         1 & 0\\
			0 & -1
                        \end{smallmatrix}\right)$, and
$\left(\begin{smallmatrix}
                         1 & 1\\
			0 & 1
                        \end{smallmatrix}\right)$
of $\GL_2(\Z)$).
Let $$Y=
\left(\begin{matrix}
                         d^2 & c^2 & dc\\
			b^2 & a^2 &  ab \\
			2bd & 2ac & ad+bc
                        \end{matrix}\right),
$$ and $Y'=\frac{1}{ad-bc}Y$.
We can compute formally that $Y' A_0 (Y')^t=A_0$.  
We can also compute formally that $Y$ gives the correct action on $B+A_0\Z$ exactly;
if $\Psi(f)=(A_0,B)$ and $\Psi(g\circ f)=(A_0,B')$, then
$Y (B+A_0\Z)Y^t=B'+A_0\Z$.  
Since $ad-bc=\pm 1$, we have that $Y (B+A_0\Z)Y^t=Y' (B+A_0\Z)(Y')^t$.
\end{proof}

The following Lemma is crucial to our main theorem, and is proven in \cite[Chapter 13, Lemma 5.2]{Cassels}.  
\begin{lemma}\label{L:Stab}
 We have $\im(\rho)=\Stab(A_0)$.
\end{lemma}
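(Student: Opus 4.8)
The plan is to show that $\Stab(A_0) \subseteq \im(\rho)$, since the reverse inclusion $\im(\rho) \subseteq \Stab(A_0)$ is already established in Theorem~\ref{T:action}. The stabilizer $\Stab(A_0) \subset \SL_3(\Z)$ is precisely the integral special orthogonal group of the ternary quadratic form $A_0$, that is, the group of $g \in \SL_3(\Z)$ with $g A_0 g^t = A_0$. So the real content is an identification of $\SO(A_0)(\Z)$ with the image of $\GL_2(\Z)$ under $\rho$. My strategy is to recognize the quadratic form $A_0$ as (a scalar multiple of) the discriminant form on binary quadratic forms, and to interpret $\rho$ as the standard symmetric-square representation of $\GL_2$ acting on that space.

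First I would make the dictionary explicit. The space $V$ of binary quadratic forms $px^2 + qxy + ry^2$ is three-dimensional with coordinates $(p,q,r)$, and the discriminant $q^2 - 4pr$ is a nondegenerate ternary quadratic form on $V$; up to the harmless scalar and coordinate matching, this is exactly $A_0$ (note $4\det A_0 = -1$). The natural $\GL_2(\Z)$ action on binary quadratic forms by substitution scales the discriminant by $\det(g)^2 = 1$, hence preserves $A_0$, and the matrix of this action in the $(p,q,r)$ basis is precisely the $Y$ appearing in $\rho$ (up to the $\det$-twist that makes the image land in $\SL_3$). This realizes $\im(\rho)$ inside $\SO(A_0)(\Z)$ and re-derives the inclusion of Theorem~\ref{T:action} conceptually. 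The heart of the matter is then the surjectivity statement: every integral automorphism of the discriminant form of $\SL_2$ over $\Z$ arises from a substitution by some $g \in \GL_2(\Z)$.

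For the surjectivity I would appeal to the cited classical result, \cite[Chapter 13, Lemma 5.2]{Cassels}. The clean way to package it is via the exceptional isomorphism $\SL_2 \cong \operatorname{Spin}$ of this ternary form: the adjoint-type action of $\PGL_2$ on the three-dimensional space of traceless $2\times 2$ matrices (equivalently on binary quadratic forms) induces the map to $\SO_3$, and over $\Z$ this map is onto the integral special orthogonal group. Concretely, given $h \in \Stab(A_0)$ I would show $h$ must come from substitution by producing the preimage in $\GL_2(\Z)$ and checking it maps to $h$ under $\rho$; the existence of such a preimage is exactly what the lemma in Cassels supplies, after translating between his normalization of the form and ours. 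Because $\ker(\rho) = \{\pm 1\}$ and the substitution action on binary quadratic forms also has kernel $\{\pm 1\}$, the induced map $\GL_2(\Z)/\{\pm 1\} \to \SO(A_0)(\Z)$ is injective, so identifying it as a bijection completes the equality $\im(\rho) = \Stab(A_0)$.

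The main obstacle I anticipate is purely bookkeeping rather than conceptual: matching our unusual coordinate conventions for $A_0$ and for the matrix $Y$ to the conventions of Cassels so that his statement (which is phrased for a specific normalized ternary form) applies to $A_0$ on the nose. Since all ternary quadratic forms of determinant $-1/4$ over $\Z$ lie in a single $\SL_3(\Z)$-class (the classical fact already invoked in the proof of Theorem~\ref{T:surjective}), this matching can be done by conjugating Cassels' form to $A_0$, which transports his automorphism group isomorphically onto $\Stab(A_0)$ and carries the parametrizing $\GL_2$ onto ours; the only care needed is tracking the $\det$-twist so the final map lands in $\SL_3(\Z)$ as written. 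Once that identification is pinned down, the equality of the two groups is immediate from the cited lemma.
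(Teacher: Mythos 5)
Your proposal is correct and takes essentially the same route as the paper: the paper's entire proof of this lemma is the citation to \cite[Chapter 13, Lemma 5.2]{Cassels}, which is exactly the surjectivity input you reduce to. Your added framing---identifying $A_0$ with the discriminant form on binary quadratic forms, recognizing $\rho$ as the determinant-twisted symmetric-square (substitution) action, and conjugating Cassels' normalized form onto $A_0$ via the one-class fact---is precisely the bookkeeping the paper leaves implicit, and it is carried out soundly, including the observation that the conjugating change of basis must intertwine the two $\GL_2(\Z)$-actions so that $\Stab(A_0)=\im(\rho)$ rather than merely a conjugate of it.
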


\section{Monogenized Cubic Rings}\label{S:cubic}

Note that given a monogenized cubic ring, $C,\omega$, there is a unique choice of $\theta$ in
$C/(\Z\oplus \omega\Z)$ that lifts to a monogenized basis of $C$ because of the orientation requirement.
We define $N$ to be the subgroup $\left(\begin{smallmatrix}
                         1 & 0\\
			* & 1
                        \end{smallmatrix}\right)$
of $\GL_2(\Z)$.  
The action of $N$  on binary cubic forms fixes their $x^3$ coefficient.  Moreover, $N$ acts on the basis of $C/\Z$ of a based cubic ring $C$ and
fixes the first basis element.  We also have that $N$ acts on pairs of ternary quadratic forms, and fixes the first form in the pair.

\begin{proposition}
We have that $N$ classes of binary cubic forms with $x^3$ coefficient $-1$ are in bijection with isomorphism classes of monogenized cubic rings. 
\end{proposition}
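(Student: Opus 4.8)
The plan is to deduce this from Theorem~\ref{T:cubic} together with the definition of a monogenized based cubic ring. By that theorem and the definition, binary cubic forms with $x^3$-coefficient $-1$ are in bijection with monogenized based cubic rings $(C;1,\omega,\theta)$, and the orientation condition $\theta\in\omega^2+\omega\Z+\Z$ forces $C=\Z[\omega]$. I would therefore consider the forgetful map sending such a form $f$ to the monogenized cubic ring $(R_f,\omega)$ obtained by remembering only the ring and its distinguished generator $\omega\in R_f/\Z$, and show that this map descends to a bijection on the stated classes.

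First I would check invariance under $N$. Since $N$ fixes the $x^3$-coefficient it acts on our set of forms, and under the equivariance of Theorem~\ref{T:cubic} the element $\left(\begin{smallmatrix}1&0\\n&1\end{smallmatrix}\right)$ replaces the basis $(\omega,\theta)$ of $C/\Z$ by $(\omega,n\omega+\theta)$, leaving both the ring $C$ and the class of $\omega$ in $C/\Z$ unchanged. Thus $(R_f,\omega)$ depends only on the $N$-orbit of $f$, so the forgetful map descends to $N$-classes. Surjectivity is then immediate from the observation opening this section: given a monogenized cubic ring $(C,\omega)$, there is a $\theta$, unique modulo $\Z\oplus\omega\Z$, making $(C;1,\omega,\theta)$ a monogenized based cubic ring, and the corresponding form has $x^3$-coefficient $-1$ and maps back to $(C,\omega)$.

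The real content is injectivity on $N$-classes, which I expect to be the only delicate step. Suppose forms $f,f'$ with $x^3$-coefficient $-1$ give isomorphic monogenized cubic rings via a ring isomorphism $\phi\colon C\to C'$ with $\phi(\omega)\equiv\omega'\pmod{\Z}$. Expressing the induced $\Z$-module isomorphism $C/\Z\to C'/\Z$ in the bases $(\omega,\theta)$ and $(\omega',\theta')$ yields a matrix $M\in\GL_2(\Z)$; the condition $\phi(\omega)\equiv\omega'\pmod{\Z}$, together with the fact that $\omega',\theta'$ are independent in $C'/\Z$, forces $M=\left(\begin{smallmatrix}1&0\\ *&s\end{smallmatrix}\right)$ with $s=\det M=\pm1$. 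By the equivariance of Theorem~\ref{T:cubic}, $f$ and $f'$ are then related by the corresponding element of $\GL_2(\Z)$, which is again lower triangular with diagonal entries $1$ and $s$ (this holds whether one takes $M$ or its inverse, so the precise identification is routine bookkeeping).

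The crux is to exclude $s=-1$: a short computation with the action $g\circ f=\tfrac{1}{ad-bc}F(ax+cy,bx+dy)$ shows that $\left(\begin{smallmatrix}1&0\\ *&-1\end{smallmatrix}\right)$ negates the $x^3$-coefficient, so it cannot carry a form with $x^3$-coefficient $-1$ to another such form. Hence $s=1$ and the relating element lies in $N$, so $f$ and $f'$ share an $N$-orbit. Combined with the preceding steps, this shows the forgetful map is a bijection between $N$-classes of binary cubic forms with $x^3$-coefficient $-1$ and isomorphism classes of monogenized cubic rings. The sign bookkeeping in excluding $s=-1$ is the only genuine subtlety; everything else is formal consequence of Theorem~\ref{T:cubic} and the orientation convention.
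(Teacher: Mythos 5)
Your proposal is correct, and its skeleton matches the paper's: both start from Theorem~\ref{T:cubic} to identify forms with $x^3$-coefficient $-1$ with monogenized based cubic rings, both observe that $N$ fixes $\omega$ and translates $\theta$ by $\omega\Z$, and both rest on the fact that $\theta$ is forced to equal $\omega^2$ modulo $\Z\oplus\omega\Z$. Where you diverge is the injectivity step, which the paper never makes explicit: in its quotient formulation, an isomorphism of monogenized rings $\phi\colon (C,\omega)\to(C',\omega')$ is \emph{automatically} compatible with the based structures, because $\phi$ is a ring map and hence $\bar\phi(\theta)\equiv\bar\phi(\omega)^2\equiv(\omega')^2\equiv\theta'\pmod{\Z\oplus\omega'\Z}$; in your notation this forces $s=1$ outright, with no case analysis. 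You instead extract only $s=\pm1$ from $\Z$-module considerations and then exclude $s=-1$ by the sign of the $x^3$-coefficient under the twisted action $g\circ f=\tfrac{1}{ad-bc}F(ax+cy,bx+dy)$. That computation is right (the leading coefficient of $F(x+cy,-y)$ is unchanged, so the prefactor $\tfrac{1}{ad-bc}=-1$ negates it), so your proof is complete; the only cost is that it is a bit longer than necessary, since using that $\phi$ respects squaring short-circuits the sign bookkeeping you flag as the crux.
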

\begin{proof}
We have that binary cubic forms with $x^3$ coefficient $-1$ are in bijection with based cubic rings in which
$1,\omega,\omega^2$ is a basis of the same orientation as the given basis $1,\omega,\theta$.  
When we pass to  $N$ classes of forms, the  correspondence is 
to cubic rings with a choice of $\omega\in C/\Z$ and $\theta \in C/ (\Z\oplus\omega\Z)$
such that $1,\omega,\omega^2$ is a basis of the same orientation as $1,\omega,\theta$.
However, given $\omega$, the only such choice of $\theta\in C/ (\Z\oplus\omega\Z)$ is $\theta=\omega^2$.
\end{proof}

\section{Main Theorem}\label{S:Main}

In this section, we prove the main theorem of this paper.  
\begin{varthm}[Theorem~\ref{T:Main}]
There is a bijection between the set of $\GL_2(\Z)$-equivalence classes of binary quartic forms and the set of isomorphism classes of pairs $(Q,C)$ where $Q$ is a quartic
ring and $C$ is a monogenized cubic resolvent of $Q$.
\end{varthm}
An isomorphism of a pair $(Q,C)$ where $C$ is monogenized, is just an isomorphism of the underlying pair of quartic ring and cubic resolvent such that the isomorphism between cubic rings preserves the chosen generator modulo $\Z$.  
\begin{proof}
 So far, we have established a bijection
$$
\bij{binary quartic forms}{$N$ classes of pairs $(A_0,B)$ of ternary quadratic forms},
$$
where $A_0$ is the fixed form defined in the Introduction, and $B$ is any ternary quadratic form.  From the parametrization 
of quartic rings \cite{HCL3}, we know that  $N$ classes of pairs $(A_0,B)$ of ternary quadratic forms
are in bijection with $(Q,C)$, where $Q$ is a based quartic ring, $C$ is an $N$ class of based cubic resolvent rings, and the
resolvent map is given by $(A_0,B)$.  Since $4\Det(A_0)=-1$, the $N$ class of bases of $C$ exactly corresponds to a monogenization of $C$.  Thus we have a bijection
$$
\bij{binary quartic forms}{$(Q,C)$, where $Q$ is a based quartic ring, $C$ is a monogenized cubic resolvent ring, and the
resolvent map is given by $(A_0,B)$}.
$$
We know that in this map the $\GL_2(\Z)$ action on binary quartic forms just corresponds to a $\SL_3(\Z)$ change of basis of $Q$, and thus gives the same isomorphism class of $(Q,C)$.  Thus the map from 
$\GL_2(\Z)$ classes of binary quartic forms to isomorphism classes of $(Q,C)$ is well-defined.  
We know the map is surjective by Theorem \ref{T:surjective}.
To show it is injective, suppose we have two pairs $(Q,C)$
and $(Q',C')$ of quartic rings with monogenized cubic resolvents.  We can choose bases for the quartic rings so that
the resolvent maps are given by $(A_0,B+A_0\Z)$ and $(A_0,B'+A_0\Z)$.
If we have an isomorphism of the pairs $(Q,C)$
and $(Q',C')$, it must come from an element $(g,h)\in \GL_2(\Z)\times \GL_3(\Z)$ with $\det(g)\det(h)=1$.
Since $g$ fixes $\omega$ and the orientation of the cubic ring (as both cubic forms have $x^3$ coefficient $-1$), it
must be an element of $N$.  Then $\det(h)=1$, and we see that the isomorphism comes from an element of $\SL_3(\Z)$ that fixes $A_0$.  
By Lemma~\ref{L:Stab}, such an element is in the image of the $\rho$ of Theorem~\ref{T:action}, and thus 
$(A_0,B+A_0\Z)$ and $(A_0,B'+A_0\Z)$ come from the same $\GL_2(\Z)$ class of binary quartic forms. 
\end{proof}

A quartic ring might have multiple cubic resolvents, only some of which are monogenic.
In our bijection, the quartic ring appears once for each monogenized resolvent.   If it has a cubic resolvent monogenic
in two different ways then it will appear for each of those monogenizations of the cubic ring. 
Also, note that the binary quartic form $-x^3y+bx^2y^2+cxy^3 +dy^4$ maps to $(A_0,B)$ with determinant
$-x^3+bx^2y+cxy^2 +dy^3$.  Thus every monogenized
cubic ring appears as a resolvent of some quartic ring.

\section{Geometric Interpretation}\label{S:Geom}

We now give a geometric interpretation of Theorem~\ref{T:Main}, which was proven in the last section.  This geometric interpretation relies on the geometric constructions of a ring from a binary form and from a pair of ternary quadratic forms from \cite{binarynic} and \cite{Quartic}, respectively.

\subsection{Geometric constructions}\label{SS:gc}
Given a binary $n$-ic form $f$, we have a map $$\O_{\P^1_{\Z}}(-n) \stackrel{f}{\ra} \O_{\P^1_{\Z}}$$ of sheaves on $\P^1_{\Z}$.  The image of this map defines
an ideal sheaf of $\P^1_{\Z}$, corresponding to a subscheme of $\P^1_\Z$ that we we call $S_f$, \emph{the scheme cut out by the form $f$}.  The scheme $S_f$ is the scheme of roots of the form $f$
in $\P^1_\Z$.  We then have a ring $H^0(S_f,\O_{S_f})$ of global functions of the scheme $S_f$.  
This construction of a ring from a binary form was given in the case $n=3$ by Deligne in \cite{Delcubic} and treated completely for all $n$ in \cite{binarynic}.
When $f$ has at least one non-zero coefficient, we have $R_f=H^0(S_f,\O_{S_f})$, which is proven in \cite[Theorem 2.4]{binarynic}. 
(See Section~\ref{SS:AB} or \cite[Section 2.4]{binarynic} for a geometric construction that works even when $f=0$.)
 In other words, the algebraic construction given in Section~\ref{S:constructions}
and this geometric construction of a ring from a binary form agree.  

Next we recall the geometric construction, given in \cite{Quartic}, of a ring from a pair of ternary quadratic forms.
Given a pair $(A,B)$ of ternary quadratic forms, we have a map
$$ \O_{\P^2_{\Z}}(-2)^{\oplus 2} \stackrel{\left[\begin{smallmatrix}
                                A \\
B
                               \end{smallmatrix}\right]}{\ra} \O_{\P^2_{\Z}}$$ 
of sheaves on $\P^2_{\Z}$.  The image of this map defines
an ideal sheaf of $\P^2_{\Z}$, corresponding to a subscheme of $\P^2_\Z$ that we we call $S_{(A,B)}$, \emph{the scheme cut out by the pair $(A,B)$}.  The scheme $S_f$ is the scheme of common roots of the quadratic forms $A,B$.  We have a ring $H^0(S_{(A,B)},\O_{S_{(A,B)}})$ of global functions on $S_{(A,B)}$.  When $A,B$ have no common non-unit factors in the ring $\Z[x,y,z]$, then
this ring agrees with Bhargava's \cite{HCL3} construction of a quartic ring from $(A,B)$.  This agreement in proven in \cite[Theorem 5.1]{Quartic}, which also gives a geometric construction of a ring from all pairs $(A,B)$; see also the letter \cite{Delquartic} from Deligne to Bhargava, which gives a geometric approach to constructing quartic rings from pairs of ternary quadratic forms that is different from the approach in \cite{Quartic}.  

 When $A,B$ generate a 2 dimensional subspace of ternary quadratic forms over $\Q$ and every $\Z/p$ (call this \emph{nice}), then
	it makes sense to talk about the $\P^1_\Z$ (pencil) of conics through $A$ and $B$ (see \cite[I.6.2:Example 1, II.6.4:Example 1]{Shaf} for an introduction to the idea of a pencil of conics).  Recall that the cubic resolvent ring associated to $(A,B)$ is the cubic ring
associated to the binary cubic form $4\Det(Ax-By)$.  
  A conic
given by a symmetric matrix $A$ is singular in a fiber if and only if $4\Det(A)$ is 0 in that fiber. To form the matrix $A$ from the conic, we must use $1/2$, but
then $D=4\Det(A)$ is a polynomial with integer coefficients in the coefficients of the form defining the conic.  Even in characteristic 2, the polynomial $D$ gives
the exact condition for singularity.
This is the form that cuts out the singular locus of the pencil of conics through $A$ and $B$ (originally described in Deligne's letter \cite{Delquartic} to Bhargava).  
So when $4\Det(Ax-By)\in\Z[x,y]$ is not $0$, the cubic resolvent ring will be given by the regular functions on the subscheme of singular conics in the $\P^1_\Z$ of conics through $A$ and $B$.

\subsection{Geometric relationship between forms}\label{SS:geomZ}
We have a map $\P^1_\Z \ra \P^2_\Z$ given by anticanonical embedding of the projective line, or $[u:v]\mapsto [v^2:u^2:uv]$.
Note that $A_0$ gives a quadratic form on $\P^2_\Z$, and the scheme cut out by this form is the rational normal curve specified above.
If we have a pair $(A_0,B)$ (with $B$ not a multiple of $A_0$), then the conic given by $A_0$ in the pencil is not singular in any fiber. 
Thus, the associated cubic resolvent ring is given by the ring of regular functions of a closed subscheme of $\P^1_\Z\setminus\{A_0\}\isom \A^1_\Z$, and thus is monogenic.

We can see from the parametrization of cubic rings (Theorem~\ref{S:cubic}) that whenever a cubic ring is monogenic, in its realization as the global functions of a subscheme of $\P^1_\Z$, that subscheme actually
sits inside an $\A^1_\Z \subset \P^1_\Z$.
Thus, if the cubic resolvent ring associated to a nice (as above) pair $(A,B)$ is monogenic, then that means that the subscheme of singular conics in the 
$\P^1_\Z$ of conics through $A$ and $B$ is disjoint from some particular conic defined over $\Z$, and we can change basis of the pencil so that it is disjoint from $A$.
  This means that $A$ is non-singular.  From the fact that there is only one $\SL_3(\Z)$ class of ternary
quadratic forms with determinant $-1/4$, we know that up to $\GL_3(\Z)$ change of basis on $\P^2_\Z$, the only such conic is the one cut out by
$\pm A_0$.  So we see that pairs $(A_0,B)$ correspond to pairs of quartic rings and cubic resolvents such that the resolvents are monogenic.

Moreover, if we have a pair $(A_0,B)$, we can pull $B$ back to a form on the $\P^1_\Z$ cut out by $A_0$ to obtain a binary quartic form
(and we obtain the same binary quartic form with any element of $B+A_0\Z$).  
We can easily compute that every binary quartic form arises this way.  In particular, if $B=b_{11}x^2+b_{12}xy+b_{13}xz+b_{22}y^2+b_{23}yz+b_{33}z^2$,
we see it pulls back to the form $b_{11}v^4+b_{12}u^2v^2+b_{13}uv^3+b_{22}u^4+b_{23}u^3v+b_{33}u^2v^2$ (exactly inverse to the map $\Psi$ defined in the introduction).
The elements of $\GL_3(\Z)$ that fix the $\P^1_\Z$ cut out by $A_0$ setwise restrict to elements of $\GL_2(\Z)$ acting on that $\P^1_\Z$.
This allows us to see the correspondence of the $\GL_2(\Z)$ action on $\P^1_\Z$ and an action on $\P^2_\Z$ which fixes the rational normal curve.  

If we have a primitive binary quartic form $f$, then the scheme $S_f$ cut out by the form is $\Spec$ of the associated ring \cite[Theorem 2.9]{binarynic}.
The ideal class of $R_f$  associated to the form $f$ (as constructed in \cite{binarynic}) is the line bundle $\O(1)$ pulled back from $\P^1_\Z$ to $S_f$
and gives a map of $S_f$ into $\P^1_\Z$.
The scheme $S_f$ is also a subscheme of $\P^2_\Z$ cut out by $(A_0,B_f)$.  We can see the relationship here between the ideal and the monogenic cubic resolvent.
The ideal gives a map of $S_f$ to $\P^1_\Z$, and then by composing with the rational normal curve map into $\P^2_\Z$ we see from the above story that the cubic resolvent is monogenic.
Conversely, a monogenic cubic resolvent gives a smooth conic on which our degree four subscheme lies (as in the above story), and pulling back $\O(1)$ from this
conic (which is isomorphic to $\P^1_\Z$) gives the ideal associated to the binary quartic form.

\subsection{Analogs over an arbitrary base}\label{SS:AB}

In the construction of a quartic ring from a binary quartic form with $\Z$ coefficients, one can replace $\Z$ with an arbitrary scheme $S$.
An \emph{binary quartic form} over $S$ is a triple $(V,L,f)$, in which $V$ and $L$ are vector bundles on $S$ of ranks $2$ and $1$ respectively, and
$f\in H^0(S,\Sym^4U \tesnor L)$.  A\emph{double ternary quadratic form} over $S$ (analogous to a pair of ternary quadratic forms) is a 
quadruple $(W,U,p,\phi)$, in which $W$ and $U$ are vector bundles on $S$ of ranks $3$ and $2$ respectively, 
$p\in H^0(S,\Sym^2 W \tesnor U)$, and an isomorphism (called an orientation) $\wedge^3 W \tensor \wedge^2 U \stackrel{\sim}{\ra} \OS$.

From either a binary quartic form or double ternary quadratic forms over $S$ one can construct a quartic $\OS$-algebra (an $\OS$-algebra
which is a locally free rank 4 $\OS$-module), or equivalently, a degree 4 cover of $S$.  If we let $\pi$ denote the map $\P(V)\ra S$ (or, respectively,
$\P(W)\ra S$) and $K$ the Koszul complex of $f$ (respectively $p$), then the quartic algebra $R_f$ (respectively $R_p$) is given by $H^0 R\pi_* K$, with the algebra structure being inherited from
the multiplication on the Koszul complex.  More details on these constructions are given in \cite[Section 3]{binarynic} and \cite[Section 4]{Quartic}.  

Given a binary quartic form $f$ over $S$ which is a \emph{non-zero-divisor} (everywhere locally on $S$), then $f$ determines
a proper subscheme $S_f$ of $\P(V)$, of generic relative dimension $0$ over each irreducible component of $S$, and the associated quartic algebra $R_f$ is $\pi_* \O_{S_f}$ (see \cite[Section 3]{binarynic}).
Moreover, if for every $s\in S$, we have that $f$ is not identically zero in the fiber over $s$, then $S_f\ra S$ has relative dimension $0$ and is the degree 4 cover associated to $f$.
This is because $S_f\ra S$ is quasi-finite and projective, and thus finite and affine by \cite[Book 4, Chapter 18, \S 12]{EGAIV}.

The most geometric analogs of the work in this paper concern Gorenstein quartic algebras, and now we will see which binary quartic forms give Gorenstein quartic algebras.
\begin{proposition}\label{P:Gor}
 Given a binary quartic form $f$ over $S$, the following are equivalent:
\begin{enumerate}
 \item for every $s\in S$, we have that $f$ is not identically zero in the fiber over $s$
\item $\Spec R_f\ra S$ is Gorenstein.
\end{enumerate}
\end{proposition}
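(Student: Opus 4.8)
The plan is to reduce the statement to a fiberwise computation over the residue fields of $S$ and then settle the case of a binary quartic form over a field directly. Since $R_f$ is by construction a locally free $\OS$-module of rank $4$, the morphism $\Spec R_f \ra S$ is finite and flat, and this holds for every $f$ regardless of degeneracy. For a finite flat morphism, being Gorenstein is equivalent to every one of its fibers being a Gorenstein scheme (a standard consequence of the characterization of Gorenstein morphisms via the relative dualizing complex). Thus it suffices to show that for each $s\in S$ the fiber $\Spec(R_f \tensor k(s))$ is Gorenstein if and only if $f$ is not identically zero in the fiber over $s$.

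The linchpin of this reduction is that the formation of $R_f = H^0 R\pi_* K$ commutes with base change, so that $R_f \tensor k(s) \isom R_{f_s}$, where $f_s$ denotes the binary quartic form over $k(s)$ obtained by restriction. This follows from cohomology and base change once one observes that the higher direct images of the terms of the Koszul complex (namely $\pi_*\O$, $R^1\pi_*\O$, $\pi_*\O(-4)$, and $R^1\pi_*\O(-4)$) are locally free and themselves commute with base change, independently of $f$; see \cite[Section 3]{binarynic}. Under this identification condition (1) becomes the condition that $f_s\neq 0$ for all $s$, and the proposition reduces to the following claim over a field $k$: for a binary quartic form $f$ over $k$, the ring $R_f$ is Gorenstein if and only if $f\neq 0$.

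Suppose first that $f\neq 0$. Then $S_f\subset \P^1_k$ is an effective Cartier divisor of degree $4$, being the zero scheme of the nonzero section $f$ on the smooth curve $\P^1_k$; it is therefore a local complete intersection and hence Gorenstein, while $\Spec R_f = S_f$ since $S_f$ is finite (hence affine) over $k$. Suppose instead that $f=0$. Then by the multiplication table of Equation~\eqref{E:multtable} with all coefficients set to zero, $R_f = k[\zeta_1,\zeta_2,\zeta_3']/(\zeta_1,\zeta_2,\zeta_3')^2$, a local Artinian $k$-algebra whose maximal ideal $\m$ satisfies $\m^2=0$ and $\dim_k \m = 3$. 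Its socle (the annihilator of $\m$) is then all of $\m$, hence $3$-dimensional, so $R_f$ is not Gorenstein, since a local Artinian ring is Gorenstein precisely when its socle is one-dimensional. Combining the two directions with the fiberwise criterion yields the equivalence.

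The main obstacle is the base-change compatibility of the Koszul construction of $R_f$: one must be certain that the globally defined rank-$4$ algebra restricts, in the fiber over $s$, to the algebra $R_{f_s}$ of the restricted form even when $f_s$ degenerates to $0$, for it is exactly the non-Gorenstein fiber $R_0$ that witnesses the failure of (2) when (1) fails. Granting this compatibility, which rests only on the local freeness of the direct images appearing in $R\pi_* K$, both implications follow from the elementary field computations above.
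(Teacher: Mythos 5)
Your proof is correct and follows essentially the same route as the paper's: the non-vanishing direction comes down to recognizing $\Spec R_f = S_f$ as a complete intersection (a degree-$4$ Cartier divisor in a $\P^1$-bundle), hence Gorenstein, and the vanishing direction comes down to the multiplication-table computation identifying the fiber with $k(s)[x,y,z]/(x,y,z)^2$, which is not Gorenstein. The only difference is one of exposition: you spell out the fiberwise criterion for Gorenstein morphisms and the base-change compatibility $R_f \tensor_{\OS} k(s) \isom R_{f_s}$ of the Koszul construction, both of which the paper's terser proof uses implicitly.
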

\begin{proof}
If for every $s\in S$, we have that $f$ is not identically zero in the fiber over $s$, then $\Spec R_f=S_f$, which is a complete intersection and thus Gorenstein. 
Conversely, suppose $\Spec R_f\ra S$ is Gorenstein but that $f$ is zero in the fiber over $s$.  Then, we have a simple computation that $R_f \tensor_{\OS} k(s)=k(s)[x,y,z]/(x,y,z)^2$
(c.f. Equation~\eqref{E:multtable}), which is not Gorenstein, a contradiction.
\end{proof}

The next proposition shows that for Gorenstein quartic covers, coming from a binary quartic form is equivalent to being a closed subscheme of a $\P^1$-bundle.
\begin{proposition}\label{P:e}
Given a quartic algebra $R$ over a scheme $S$, we have that the following are equivalent
\begin{enumerate}
 \item there exists a binary quartic form $f$ such that $R\isom R_f$ and $\Spec R\ra S$ is Gorenstein
\item $\Spec R \ra S$ factors through a $\P^1$ bundle $X\ra S$ such that $\Spec R \ra X$ is a closed immersion.
\end{enumerate} 
\end{proposition}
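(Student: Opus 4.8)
The plan is to prove the two implications of Proposition~\ref{P:e} separately, concentrating essentially all of the work in the direction $(2)\Rightarrow(1)$; the other direction is almost immediate from Proposition~\ref{P:Gor} and the construction in Section~\ref{SS:AB}.

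For $(1)\Rightarrow(2)$, suppose $R\isom R_f$ for a binary quartic form $(V,L,f)$ over $S$ and that $\Spec R\ra S$ is Gorenstein. By Proposition~\ref{P:Gor} the Gorenstein hypothesis is equivalent to $f$ being not identically zero in any fiber, so, as recalled in Section~\ref{SS:AB}, the zero scheme $S_f\subset\P(V)$ is a closed subscheme which is finite over $S$ and satisfies $\pi_*\O_{S_f}\isom R_f\isom R$, where $\pi\colon\P(V)\ra S$ is the projection. Since $S_f\ra S$ is finite, hence affine, we have $S_f=\Spec_S\pi_*\O_{S_f}=\Spec R$, and the inclusion $S_f\hookrightarrow\P(V)$ is exactly a closed immersion of $\Spec R$ into the $\P^1$-bundle $X=\P(V)$ factoring $\Spec R\ra S$. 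This gives $(2)$.

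For $(2)\Rightarrow(1)$, write $X=\P(V)$ for the given $\P^1$-bundle, let $\pi\colon X\ra S$ be the projection, and let $Z\subset X$ be the image of the closed immersion $\Spec R\hookrightarrow X$. Since $R$ is a quartic algebra, $Z\ra S$ is finite locally free of degree $4$, in particular flat, while $\pi$ is smooth projective of relative dimension $1$. First I would show that $Z$ is a relative effective Cartier divisor in $X/S$. Because $Z\ra S$ is flat, it suffices by the standard criterion for relative effective Cartier divisors to check that each fiber $Z_s$ is an effective Cartier divisor in $X_s$; but $Z_s$ is a finite closed subscheme of the smooth curve $X_s$ over the field $k(s)$, and every finite subscheme of a smooth curve is an effective Cartier divisor. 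The same fiberwise statement shows each $Z_s$ is a local complete intersection over $k(s)$, hence Gorenstein, so the flat morphism $Z\ra S$ is Gorenstein, which already supplies the Gorenstein clause of $(1)$.

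It then remains to produce the binary quartic form. The ideal sheaf $I_Z=\O_X(-Z)$ is now invertible, and the line bundle $M:=\O_X(Z)\tensor\O_{\P(V)}(-4)$ has degree $0$ on every fiber of $\pi$; by cohomology and base change (or the seesaw principle), $L:=\pi_*M$ is a line bundle on $S$ and $\pi^*L\isom M$, so $\O_X(Z)\isom\O_{\P(V)}(4)\tensor\pi^*L$. The canonical inclusion $\O_X(-Z)\hookrightarrow\O_X$ is multiplication by a section $f\in H^0\bigl(X,\O_{\P(V)}(4)\tensor\pi^*L\bigr)\isom H^0\bigl(S,\Sym^4 V\tensor L\bigr)$, which is precisely a binary quartic form over $S$ whose zero scheme is $S_f=Z$. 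Finally $R_f=\pi_*\O_{S_f}=\pi_*\bigl(\O_X/\O_X(-Z)\bigr)=\pi_*\O_Z=R$, so $R\isom R_f$, completing $(2)\Rightarrow(1)$. The main obstacle is the middle step, namely verifying that the finite flat subscheme $Z$ has invertible ideal sheaf, i.e.\ is a relative effective Cartier divisor; once that is in hand, the identification of the twist through $\Pic(\P(V))$ and the recovery of $f$ are formal.
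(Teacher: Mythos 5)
Your proof is correct and follows essentially the same route as the paper: both directions hinge on identifying $\Spec R$ with a relative degree-$4$ divisor in the $\P^1$-bundle and reading off its (suitably twisted) defining equation as the binary quartic form. The only difference is one of detail: where the paper simply asserts that $\Spec R$ is locally cut out by a degree-$4$ equation determined up to a unit on $S$, you justify this key step via the fiberwise criterion for relative effective Cartier divisors and then descend $\O_X(Z)\tensor\O_{\P(V)}(-4)$ to a line bundle on $S$ by cohomology and base change.
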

\begin{proof}
For an $\O_S$-vector bundle $W$, let $W^*$ denote the dual $\sHom(W,\O_S)$.
 The fact that (1) implies (2) is clear from the above.  If we have $\Spec R \sub \P(V)$ (as $S$ schemes), where $V$ is a vector bundle of rank $2$ on $S$,
then locally on $S$, we have that $\Spec R_f$ is cut out by a degree 4 equation, determined up to a unit on $S$.  Thus we have a line bundle
$L$ on $S$ and  $L\sub \pi_* \O(4)_{\P(V)}$
is the line bundle of sections vanishing on $\Spec R_f$.  This gives a map $L \ra \Sym^4 V$, or equivalently, a section of $H^0(\Sym^4 V\tesnor L^{*})$, which is a binary quartic form $f$.
Then from the above we see that $R$ is the quartic algebra associated to $f$, and that $\Spec R\ra S$ is Gorenstein, showing that (2) implies (1).
\end{proof}

One can also define cubic resolvents of quartic algebras over $S$ analogously to Definition~\ref{D:real} (see \cite[Section 1.1]{Quartic}).
For convenience, by way of the work of \cite{Quartic} that establishes a correspondence between double ternary quartic forms and quartic algebras with cubic resolvents analogous to Theorem~\ref{T:B},
we work as follows.  A double ternary quadratic form has a determinant binary cubic form $\Det(p)$ (see \cite[Section 3]{Quartic}), which locally agrees with the above construction
$4\Det(Ax-By)$.  If a double ternary quadratic form $p$ has an associated quartic algebra $R_p$, we call the cubic algebra associated to $\Det(p)$ (via the same construction as a 
quartic algebra from a binary quartic form) a \emph{cubic resolvent} of $R_p$.

Now we will see that a quartic algebra from a binary quartic form has a cubic resolvent that is a closed subscheme of (the total space of a) line bundle.  This is the analog over $S$ of monogenicity over $\Z$.

\begin{proposition}\label{P:m}
  If $R$ is a quartic algebra over $S$ satisfying the conditions of Proposition~\ref{P:e}, then  $R$ has a Gorenstein cubic resolvent $C$ such that $\Spec C$ is a closed sub-$S$-scheme of the total space of a line bundle  over $S$.  
\end{proposition}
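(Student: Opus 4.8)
The plan is to build the cubic resolvent $C$ geometrically, carrying the picture of Section~\ref{SS:geomZ} over to the relative setting. By Proposition~\ref{P:e} we may write $R\isom R_f$ for a binary quartic form $(V,L,f)$ with $\Spec R\sub\P(V)$ a closed subscheme of a $\P^1$-bundle over $S$. First I would form the relative degree-two Veronese (rational normal curve) embedding $\P(V)\hookrightarrow\P(\Sym^2 V)$, $[\ell]\mapsto[\ell^2]$, and set $W=\Sym^2 V$, a rank-$3$ bundle. The image of $\P(V)$ is cut out by a canonical quadratic form $A_0$, arising from the kernel of the multiplication map $\Sym^2 W\ra\Sym^4 V$ (a sub-line-bundle, isomorphic up to twist to $(\wedge^2 V)^{\otimes 2}$); in each geometric fiber $A_0$ is the smooth conic of determinant $-1/4$.

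Next I would extend $f$ to a second ternary quadratic form $B_f$ restricting to $f$ along the Veronese curve---the relative analog of the map $\Psi$ of the Introduction---so as to obtain a double ternary quadratic form $(W,U,p,\phi)$, with $U$ the rank-$2$ pencil spanned by $A_0$ and $B_f$ and $\phi$ the induced orientation $\wedge^3 W\otimes\wedge^2 U\stackrel{\sim}{\ra}\OS$. Since $\Spec R$ is a degree-four subscheme lying on the smooth conic $A_0$, its ideal in $\P(W)$ is cut out locally by $A_0$ together with one further conic, so this pencil is exactly what recovers $R$: by the correspondence of \cite{Quartic} (the $S$-analog of Theorem~\ref{T:B}), the quartic algebra of $p$ is $R$, and its cubic resolvent $C$ is the cubic algebra associated to the determinant binary cubic form $\Det(p)$.

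It then remains to verify that this $C$ is Gorenstein and embeds in the total space of a line bundle. The subscheme $\Spec C$ is the locus of singular conics in the pencil $\P(U)\ra S$, cut out by $\Det(p)$. Because $A_0$ is nonsingular in every fiber, the value of $\Det(p)$ at the section $[A_0]$ of $\P(U)$ equals $\Det(A_0)\ne 0$ fiberwise; hence $\Det(p)$ is not identically zero in any fiber, and by the complete-intersection argument of Proposition~\ref{P:Gor} applied to the binary cubic form $\Det(p)$, the scheme $\Spec C$ is a relative complete intersection in the smooth $\P^1$-bundle $\P(U)$ and is therefore Gorenstein. The same fiberwise nonvanishing shows $\Spec C$ is disjoint from the section $[A_0]$, and the complement of a section of a $\P^1$-bundle is canonically the total space of a line bundle over $S$ (namely $(U/L_0)^*\otimes L_0$, where $L_0\sub U$ is the sub-line-bundle cutting out the section). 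Thus $\Spec C$ is a closed sub-$S$-scheme of the total space of a line bundle, the sought analog of monogenicity.

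The main obstacle is the line-bundle and orientation bookkeeping of the middle step: pinning down the correct duals and twists so that the relative Veronese, the canonical conic $A_0$, the extension $B_f$, and the orientation $\phi$ assemble into a bona fide double ternary quadratic form over $S$ whose quartic algebra is genuinely $R$ (matching the conventions of \cite{Quartic}). Once the bundles are fixed, each remaining assertion---nonsingularity of $A_0$, nonvanishing of $\Det(p)$ at $[A_0]$, and the Gorenstein and complete-intersection claims---is purely fiberwise and reduces to the field computations already carried out in Section~\ref{SS:geomZ}.
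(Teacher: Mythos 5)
Your overall strategy is the same as the paper's: embed $\Spec R$ into $\P(\Sym^2 V)$ via the relative Veronese, take the pencil of conics vanishing on $\Spec R$, feed the resulting double ternary quadratic form into the correspondence of \cite{Quartic}, and then deduce both the Gorenstein property of $C$ and the line-bundle embedding of $\Spec C$ from the fact that $\P(V)$ is a smooth conic in the pencil, fiberwise disjoint from the singular locus. Your final paragraph (nonvanishing of $\Det(p)$ along the section $[A_0]$, the complete-intersection argument of Proposition~\ref{P:Gor}, and identifying the complement of a section of a $\P^1$-bundle with the total space of a line bundle) matches the paper's endgame.

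However, the step you defer as ``the main obstacle'' is precisely the mathematical content of the paper's proof, and as written your middle step has two genuine gaps. First, there is in general no global form $B_f$ extending $f$: the pencil of conics through $\Spec R$ is the rank-$2$ bundle $M\sub\Sym^2\Sym^2 V$ defined intrinsically as the preimage of $\im(g)$ under the natural map $\Sym^2\Sym^2 V\ra\Sym^4 V$ (where $g: L\ra\Sym^4 V$ is given by $f$); this bundle is an extension of line bundles that need not split over $S$, so ``the pencil spanned by $A_0$ and $B_f$'' only makes sense locally. Second, and more seriously, there is no ``induced orientation'': the line bundle $\wedge^3(\Sym^2 V)\tensor\wedge^2 M^*$ is in general nontrivial, so the quadruple you write down fails to be a double ternary quadratic form at all. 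The paper resolves exactly this by setting $P:=\wedge^3(\Sym^2 V)\tensor\wedge^2 M^*$, replacing $\Sym^2 V$ by $(\Sym^2 V)\tensor P$ and $M$ by $N\isom M\tensor P^2$ (the preimage construction applied to $h: L\tensor P^2\ra\Sym^4 V\tensor P^2$); this twist is calibrated so that $\wedge^3\bigl((\Sym^2 V)\tensor P\bigr)\tensor\wedge^2 N^*\isom\O_S$ canonically, and since $\P(\Sym^2 V)\isom\P((\Sym^2 V)\tensor P)$ your fiberwise arguments then go through unchanged. Without this twisting step (or some substitute for it), the argument is complete only when the relevant line bundles happen to be trivial, e.g.\ over $S=\Spec\Z$, and so does not prove the proposition over an arbitrary base.
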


\begin{proof}
Assuming (2) of Proposition~\ref{P:e}, we have $\Spec R \sub \P(V)$, and an associated binary quartic form $f\in H^0(\Sym^4 V\tesnor L^{*})$.  The idea of this proof is that we will produce a double ternary quadratic form that has associated quartic algebra $R$.  In this way, we will produce a cubic resolvent of $R$, and see that it has the desired property.
To produce a double ternary quadratic form that has associated quartic algebra $R$, we will embedd $R$ into a $\P^2$-bundle over $S$ and then use the pencil of conics vanishing on $R$ as our double ternary quadratic form.  One technical difficulty is that we must produce an orientation for our double ternary quadratic form.  We handle this by first constructing the most natural candidate for a double ternary quadratic form, measuring the line bundle that obstructs it from having an orientation, and then twisting things appropriately by that line bundle.

  The section $f$ naturally gives a map $g: L \ra \Sym^4 V$. 
Let $M \sub \Sym^2 \Sym^2 V$ be the pre-image of $\im(g)$ in the natural map  $\Sym^2 \Sym^2 V \ra \Sym^4 V$.  It is easy to see, by working locally, that $M$ is a rank $2$ vector bundle
over $S$.  Moreover, the forms in $M$ are the forms of $\Sym^2 \Sym^2 V$ that vanish on $\Spec R$ in the composite map $\Spec R \sub \P(V) \sub \P(\Sym^2 V)$.  So $M$ gives a pencil on conics
in $\P(\Sym^2 V)$ that cut out $\Spec R$ as a closed subscheme.  However, $M$ may not allow the orientation ismorphism required to be a double ternary quadratic form. 
We have a line bundle $P:=\wedge^3(\Sym^2 V)\tensor \wedge^2 M^*$, that measures the failure of $M$ to allow this orientation isomorphism.  

We now twist by $P$ to construct a double ternary quadratic form from the pencil of conics determined by $M$.
We have a map $h: L\tensor P^2 \ra \Sym^4 V\tensor P^2$ induced from the map $g$ above.
Let $N \sub \Sym^2 ((\Sym^2 V)\tesnor P)$ be the pre-image of $\im(h)$ in the natural map  $\Sym^2 ((\Sym^2 V)\tesnor P) \ra \Sym^4 V\tensor P^2$.
We have $N\isom M\tensor P^2$.

The map $N \sub \Sym^2 ((\Sym^2 V)\tesnor P)$ gives a section $p \in H^0(S,\Sym^2 ((\Sym^2 V)\tesnor P)\tesnor N^*)$ and thus a double ternary quadratic form $((\Sym^2 V)\tesnor P, N^*,p,\phi)$,
where $\phi$ is the natural map $\wedge^3 ((\Sym^2 V)\tesnor P ) \tensor \wedge^2 N^* \isom \wedge^3 (\Sym^2 V)\tesnor P^{\tensor 3} \tensor \wedge^2 M^* \tesnor P^{\tensor -4}\isom \O_S$.
Note that $\phi$ exists because our twist by $P$ was exactly the twist necessary to give the isomorphism $\phi$.

We have $\P(V) \sub \P(\Sym^2 V)\isom \P(\Sym^2 V \tensor P)$, cut out by the kernel of the map $\Sym^2 ((\Sym^2 V)\tesnor P) \ra \Sym^4 V\tensor P^2$.
By the same argument as in Section~\ref{SS:gc}, the cubic resolvent $C$ associated to $p$ is the cubic algebra associated to the binary cubic form $c$ cutting out the singular conics in the pencil of conics through
$\Spec R \sub \P(\Sym^2 V \tensor P)$.  Since $\P(V)$ is a smooth conic in this pencil, we see that $c$ is non-zero in each fiber over $S$, and thus the cubic resolvent is Gorenstein and $\Spec C$
is cut out by $c$ in the pencil of conics through $\Spec R$ (by the same argument as in Proposition~\ref{P:Gor}).  Moreover, since $\P(V)$ is a smooth conic, we see that $\Spec C$ is a closed subscheme not only in the $\P^1$ bundle of conics, but in the line bundle constructed by removing $\P(V)$ from the pencil.
\end{proof}

We have a partial converse to Proposition~\ref{P:m}.

\begin{proposition}\label{P:n}
   If $R$ is a Gorenstein quartic algebra over an integral scheme $S$, and $R$ has a Gorenstein cubic resolvent $C$ such that $\Spec C$ is a closed sub-$S$-scheme of the total space of a line bundle over $S$, then
  $\Spec R \ra S$ factors through a (smooth) conic bundle $X\ra S$ such that $\Spec R \ra X$ is a closed immersion.
\end{proposition}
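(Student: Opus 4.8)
The plan is to run the geometry of Section~\ref{SS:geomZ} in families, reversing the construction of Proposition~\ref{P:m}: the monogenicity of the cubic resolvent will single out a smooth conic in the pencil of conics cutting out $\Spec R$, and that conic will be the desired conic bundle. First I would realize the pair $(R,C)$ by a double ternary quadratic form. By the correspondence of \cite{Quartic} analogous to Theorem~\ref{T:B}, which applies to our Gorenstein $R$ together with its cubic resolvent $C$, there is a double ternary quadratic form $(W,U,p,\phi)$ whose associated quartic algebra is $R$ and whose cubic resolvent is $C$. As in Section~\ref{SS:gc}, this exhibits $\Spec R$ as the base locus of the pencil of conics $p$ inside the $\P^2$-bundle $\P(W)\ra S$, parametrized by the $\P^1$-bundle $\P(U)\ra S$, so that every conic in the pencil contains $\Spec R$. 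The cubic resolvent $C$ is the cubic algebra of the binary cubic form $\Det(p)$, and since $C$ is Gorenstein, the analog of Proposition~\ref{P:Gor} shows that $\Spec C\sub\P(U)$ is exactly the divisor of singular conics cut out by $\Det(p)$.

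Next I would feed in the monogenicity hypothesis to produce a section of $\P(U)$ disjoint from $\Spec C$. The embedding $\Spec C\sub\P(U)$ as the singular locus agrees with the canonical embedding of the cubic algebra $C$ into a $\P^1$-bundle coming from its binary cubic form, as in Section~\ref{SS:gc}. As explained over $\Z$ in Section~\ref{SS:geomZ}, a cubic algebra whose spectrum is a closed subscheme of the total space of a line bundle $L$ has this canonical embedding landing inside $\P(L\oplus\O_S)$ away from the section at infinity; concretely, $\Spec C$ avoids a section $\sigma\colon S\ra\P(U)$. Here is where I would use that $S$ is integral: it lets me identify the $\P^1$-bundle $\P(L\oplus\O_S)$ compactifying the given line bundle with the bundle $\P(U)$ carrying the pencil, and it guarantees that the resulting $\sigma$ is defined over all of $S$.

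The section $\sigma$ then picks out one conic of the pencil over each point of $S$, that is, a conic sub-bundle $X\sub\P(W)$ with structure map $X\ra S$. Since $\sigma(S)$ is disjoint from the singular locus $\Spec C$, the conic $X$ is nonsingular in every fiber, so $X\ra S$ is a smooth conic bundle. Finally, because $\Spec R$ lies on every conic of the pencil it lies on $X$, and as $\Spec R\sub\P(W)$ and $X\sub\P(W)$ are both closed, the induced map $\Spec R\ra X$ is a closed immersion; composing with $X\ra S$ gives the asserted factorization.

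The main obstacle is the middle step: matching the \emph{intrinsic} monogenicity of $C$ --- recorded only as the statement that $\Spec C$ sits in some abstract line bundle --- with a section of the \emph{specific} $\P^1$-bundle $\P(U)$ produced by the double ternary quadratic form, and then checking that this section is disjoint from $\Spec C$ over \emph{all} of $S$. This reconciliation of the two $\P^1$-bundle structures carrying $\Spec C$, together with the passage from generic to everywhere smoothness of the conic, is exactly where the hypothesis that $S$ is integral is needed.
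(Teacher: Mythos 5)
Your overall strategy is the same as the paper's: realize $(R,C)$ by a double ternary quadratic form $(W,U,p,\phi)$, identify $\Spec C$ with the locus of singular conics in the pencil $\P(U)$, use the line-bundle hypothesis to produce a section of $\P(U)$ disjoint from $\Spec C$, and take the smooth conic bundle that section picks out. But there is a genuine gap at exactly the step you flag as ``the main obstacle,'' and the resolution you offer for it does not work. You need to reconcile two a priori unrelated closed immersions of $\Spec C$ into $\P^1$-bundles over $S$: the one given by the hypothesis (sit $\Spec C$ inside a line bundle and complete it to $\P(L\oplus \O_S)$), and the one given by the pencil of conics (where $\Spec C$ is cut out by $\Det(p)$). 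Your proposal asserts that integrality of $S$ ``lets me identify'' these two $\P^1$-bundles compatibly with the embeddings, but integrality provides no such identification: two $\P^1$-bundles over an integral scheme containing isomorphic closed sub-$S$-schemes need not be isomorphic as pairs, and nothing in your argument produces the required isomorphism. Without it, the section at infinity of $\P(L\oplus\O_S)$ lives in the wrong bundle and gives you no section of $\P(U)$, so no conic in the pencil is selected.

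The ingredient that fills this gap in the paper is the theorem of Casnati and Ekedahl (\cite[Theorem 1.3]{CasI}): since $C\ra S$ is Gorenstein, the closed immersion of $\Spec C$ into a $\P^1$-bundle over $S$ is \emph{unique up to isomorphism}. This is why the Gorenstein hypothesis on $C$ appears in the statement --- it is not merely needed to know that $\Det(p)$ cuts out $\Spec C$ scheme-theoretically (your use of the analog of Proposition~\ref{P:Gor}), but is precisely what forces the compactified line-bundle embedding and the pencil embedding to agree, transporting the section at infinity to a section of $\P(U)$ missing $\Spec C$. Once you cite that uniqueness, the rest of your argument (smoothness of the selected conic because it avoids the singular locus, and the closed immersion $\Spec R\ra X$) goes through as in the paper; note also that the paper uses the Gorenstein hypothesis on $R$ to know that $p$ actually cuts out $\Spec R$ in $\P(W)$, a point your write-up passes over when declaring $\Spec R$ to be the base locus of the pencil.
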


\begin{proof}
 Let $(W,U,p,\phi)$ be a double ternary quadratic form giving $R$ and $C$.  
Since $R$ is Gorenstein, we can deduce that $p$ cuts out $\Spec R$ in $\P(W)$ (by a computation showing all forms over an algebraically closed field not cutting out a relative dimension $0$ $S$-scheme
give non-Gorenstein algebras).  Since $C\ra S$ is Gorenstein, $C$ (as an $S$-scheme) has a unique (up to isomorphism) closed immersion in a $\P^1$ bundle over $S$ by \cite[Theorem 1.3]{CasI}. 
Since $\Spec C$ is a closed sub-$S$-scheme of the total space of a line bundle over $S$, completing that line bundle to a $\P^1$-bundle gives such a closed immersion.

 Also, $\Spec C$ is the singular locus in the pencil of conics through $\Spec R$, which gives a closed immersion of $\Spec C$ into a $\P^1$-bundle (the pencil of conics).  Thus in this closed immersion $\Spec C$ must sit inside a line bundle in the $\P^1$ bundle.  The difference between the line bundle and the $\P^1$-bundle gives a section of $S$ into the $\P^1$-bundle (the pencil of conics).  This section corresponds to a conic bundle over $S$ through $\Spec R$, but since it has no intersection with $\Spec C$, it is a smooth conic bundle.
\end{proof}

Over a general scheme $S$, a conic bundle is not necessary a $\P^1$-bundle, and thus Proposition~\ref{P:n} does not give a full converse to Proposition~\ref{P:m}.  Over $\Z$, of course, there is a unique (smooth) conic bundle, which is a $\P^1$-bundle and we have a full converse.  Also, the explicit argument given for Theorem~\ref{T:Main} works for all quartic algebras, whereas the geometric arguments given above are restricted to Gorenstein quartic algebras.  (When one tries to extend to the non-Gorestein locus by the hypercohomology construction of the quartic algebras, one recovers essentially the explicit argument given in the first part of this paper.)  The recent results on class groups of monogenic cubic rings \cite{BS} are not restricted by any Gorenstein condition, and thus over $\Z$ it is convenient to have the complete statement of Theorem~\ref{T:Main}.

\section{$\GL_2(\Z)$ invariants of binary quartic forms and cubic resolvent rings}\label{S:Canonical}

We have a canonical map $\Psi'$
which sends $f=f_0 x^4 +f_1 x^3y+f_2x^2y^2+f_3 xy^3 +f_4y^4$ to
$$
\left( A_0
,
 \begin{pmatrix} f_4 & \frac{f_2}{6} & \frac{f_3}{2} \\
\frac{f_2}{6} & f_0 & \frac{f_1}{2} \\
\frac{f_3}{2} & \frac{f_1}{2} &  \frac{2f_2}{3}
\end{pmatrix}
\right),
$$
which is equivariant with respect the $\GL_2(\Z)$ action on the binary quartic forms and the $\GL_2(\Z)$ action
on pairs of ternary quadratic forms given in Theorem~\ref{T:action}. Our previous map $\Psi$  was equivariant only as a map to
$N$ classes of pairs of ternary quadratic forms.  However $\Psi$ was defined over $\Z$, and $\Psi'$ requires the use of $\frac{1}{3}$.   

The determinant binary cubic of any element in the image of $\Psi'$ has $x^3y$ coefficient 0.
If $3\mid f_2$, then $\Psi'(f)$ has integral coefficients and its determinant is the unique binary cubic form to give a basis $1,\omega,\theta$ such that $\omega\theta\in \Z$ and 
$\omega^2-\theta\in\Z$,
where $\omega$ is the generator of the resolvent cubic associated to the form $f$.
If $3\nmid f_2$, then there is no basis $1,\omega,\theta$ of the monogenized resolvent cubic $C,\omega$ such that $\omega\theta\in \Z$ and  $\omega^2-\theta\in\Z$. 

We  define $N_{1/3}$ to be the group of matrices of the form $\left(\begin{smallmatrix}
                                 1 & 0\\
				n & 1
                                \end{smallmatrix}\right)$,
where $n\in\frac{1}{3}\Z$.

\begin{proposition}
The map from $N$ classes of monogenic binary cubic forms to $N_{1/3}$ classes of binary cubic forms
is injective.
\end{proposition}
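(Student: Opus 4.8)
The plan is to unwind the definitions and reduce injectivity to a single integrality computation modulo $3$. Since $N\subset N_{1/3}$, the inclusion of integral forms into all binary cubic forms is $N$-equivariant, so the map is well-defined and sends the $N$-class of $f$ to its $N_{1/3}$-class. A monogenic binary cubic form here is an integral form with $x^3$-coefficient $-1$ (these are exactly the determinants $4\Det(A_0x-By)$, and by the proposition of Section~\ref{S:cubic} their $N$-classes are the monogenized cubic rings). Injectivity then means: if two such forms $f,f'$ satisfy $f'=g\circ f$ for some $g\in N_{1/3}$, they are already $N$-equivalent. So I would fix $g=\left(\begin{smallmatrix}1&0\\n&1\end{smallmatrix}\right)$ with $n\in\frac{1}{3}\Z$ and aim to prove $n\in\Z$, which gives $g\in N$.

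Next I would compute the action on coefficients. As $\det g=1$, the twisted action reduces to $g\circ f=F(x+ny,y)$. Writing $f=-x^3+bx^2y+cxy^2+dy^3$, a direct expansion gives that $f'=g\circ f$ has $x^2y$-coefficient $b-3n$, $xy^2$-coefficient $c+2bn-3n^2$, and $y^3$-coefficient $d+cn+bn^2-n^3$. Setting $n=m/3$ with $m\in\Z$, the hypothesis that $f'$ is integral is precisely the requirement that each of these three expressions lie in $\Z$.

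The crux is the constant-in-$x$ (i.e.\ $y^3$) coefficient. Clearing denominators, $d+cn+bn^2-n^3=d+\tfrac{9cm+3bm^2-m^3}{27}$, so integrality forces $27\mid 9cm+3bm^2-m^3$; reducing modulo $3$ and using $m^3\equiv m\pmod 3$ leaves $-m\equiv 0\pmod 3$, hence $3\mid m$ and $n\in\Z$. This is the one real step and the only obstacle I would isolate: the $x^2y$-coefficient only yields $3n\in\Z$ (satisfied by every $n\in\frac{1}{3}\Z$), and the $xy^2$-coefficient gives the weaker condition $3\mid m(2b-m)$, so it is exactly the $y^3$-coefficient, read modulo $3$, that pins down $3\mid m$. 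Concluding $n\in\Z$ shows $g\in N$, so $f$ and $f'$ lie in the same $N$-class, proving the map is injective.
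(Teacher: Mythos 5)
Your proof is correct and follows essentially the same route as the paper: the paper's proof also applies $\left(\begin{smallmatrix}1&0\\k/3&1\end{smallmatrix}\right)$ to $-x^3+bx^2y+cxy^2+dy^3$ and observes that the new $y^3$-coefficient $d+\frac{ck}{3}+\frac{bk^2}{9}-\frac{k^3}{27}$ is an integer only when $3\mid k$. Your write-up is just a more detailed version, making the well-definedness and the reduction modulo $3$ explicit and noting that the other two coefficients give only weaker conditions.
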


\begin{proof}
Consider the action of $\left(\begin{smallmatrix}
                                 1 & \\
				k/3 & 1
                                \end{smallmatrix}\right)$ (where $k\in\Z$) on the form
 $-x^3+bx^2y+cxy^2+dy^3$.  The new coefficient of $y^3$ is
 $d-\frac{ck}{3}+\frac{bk^2}{9}-\frac{k^2}{27}$, which is only an integer if $k$ is divisible by 3.  
\end{proof}

The determinant of $\Psi'(f)$ is
 $$-x^3+\frac{I}{3}xy^2-\frac{J}{27}y^3,$$
where $I$ and $J$ are generators for the $\SL_2(\Z)$ invariants of binary quartic forms, given by
$$\frac{I}{3}=4f_0f_4-f_1f_3+\frac{1}{3} f_2^2 \quad \text{and} \quad \frac{-J}{27}=\frac{-8}{3}f_0f_2f_4+\frac{2}{27}f_2^3+
f_0f_3^2+f_4f_1^2-\frac{1}{3}f_1f_2f_3.
$$
Given a binary quartic form, we have an associated quartic ring and a monogenized cubic resolvent $C$ with generator $\omega$.  We can thus give the monogenized cubic resolvent canonically by saying it corresponds to the $N$ class of 
 binary cubic forms over $\Z$ in the 
$N_{1/3}$ class of  $-x^3+\frac{I}{3}xy^2-\frac{J}{27}y^3.$

Let $r$ be a root of $-x^3+\frac{I}{3}x-\frac{J}{27}$.  Then there is only one $\Z$ coset of algebraic integers in $r +\frac{1}{3}\Z$, and it is
$\omega+\Z$.  So, we have found a description for $\omega$ in terms of the $\GL_2(\Z)$ invariants of the binary quartic form.  
(Note that even if $-x^3+\frac{I}{3}x-\frac{J}{27}$ is reducible, we can still make sense of $r$ as an element of 
$\Q(r)/(-r^3+\frac{I}{3}r-\frac{J}{27})$ and there is only one $\Z$ coset in $r +\frac{1}{3}\Z$ whose elements generate algebras
that are finitely generated $\Z$-modules.)

\section{Appendix: Cubic Resolvents}\label{A:CubRes}
Let $Q$ be a quartic ring. The following definition is an important step in constructing the cubic resolvent of a general quartic ring, and was first given in \cite[Definition 6]{HCL3}, though see 
\cite{Sk} for a more thorough treatment.

\begin{definition}
 The \emph{$S_4$-closure of $Q$}, denoted $\bar{Q}$, is the ring $Q^{\tensor 4}/J_Q$, where $J_Q$ is the $\Z$-saturation of the ideal $I_Q$ generated
by all the elements of the form $$x\tesnor1\tensor1\tensor1 + 1\tesnor x\tensor 1\tensor1 +1\tesnor 1\tensor x\tensor1+1\tesnor1\tensor1\tensor x$$ for $x\in Q$
(that is, $J_Q=\{r\in Q^{\tensor 4} | nr\in I_Q \textrm{ for some } n\in\Z\}$).
\end{definition}

The idea is that $1\tesnor x\tensor 1\tensor1,1\tesnor 1\tensor x\tensor1,1\tesnor1\tensor1\tensor x$ act as formal conjugates of $x\tesnor1\tensor1\tensor1$.
We then consider the cubic invariant ring of $Q$, that is
$$
R^{inv}(Q):=\Z[\{x\tesnor x\tensor1\tensor1 + 1\tesnor 1\tensor x\tensor x | x\in Q\}]\sub \bar{Q}.
$$

Now we can give the definition of a cubic resolvent ring as given in \cite[Definition 8]{HCL3}.
\begin{definition}
 If $Q$ is a quartic ring with non-zero discriminant, \emph{a cubic resolvent ring} $C$ of $Q$ is any cubic ring with $R^{inv}(Q)$ as a subring such that $\disc(C)=\disc(Q)$.
Then $x\mapsto x\tesnor x\tensor1\tensor1 + 1\tesnor 1\tensor x\tensor x$ gives a quadratic map from $Q$ to $C$.
\end{definition}

We also give the following alternative definition (\cite[Definition 20]{HCL3}) which works for all quartic rings, and is easier to use in practice in some cases.

\begin{definition}\label{D:real}
 Given a quartic ring $Q$, \emph{a cubic resolvent}
$C$ of $Q$ is
\begin{itemize}
 \item a cubic ring $C$ 
 \item a quadratic map $\phi : Q/\Z \ra C/\Z$, and
 \item an isomorphism $\delta : \wedge^4 Q \isom \wedge^3 C$
(or equivalently $\bar{\delta}: \wedge^3 Q/\Z \isom \wedge^2 C/\Z$)
\end{itemize}
such that
\begin{enumerate}
 \item for all $x,y\in Q$, 
we have $\delta(1\wedge x\wedge y\wedge xy)=1 \wedge \phi(x)\wedge \phi(y) $
 \item the binary cubic form in $\Sym^3(C/\Z)^* \tensor \wedge^2(C/Z)$ corresponding to $C$ is $\Det(\phi)$.
\end{enumerate}
\end{definition}

\section*{Acknowledgements}
The author would like to thank Manjul Bhargava for asking the questions that inspired this research, guidance along the way, and helpful feedback both on the ideas
and the exposition in this paper. 
The author would also like to thank the referee for several suggestions that improved the paper.
Part of this work was done while the author was supported by an NSF Graduate Fellowship, an NDSEG Fellowship, an AAUW Dissertation Fellowship, and a Josephine De K\'{a}rm\'{a}n Fellowship.  Part of this work was done while the author was supported by an American Institute of Mathematics Five-Year Fellowship and National Science Foundation grant DMS-1001083.


\begin{thebibliography}{99} 


\bibitem{HCL3} M. Bhargava, Higher composition laws. III. The parametrization of quartic rings, Ann. of Math. (2) {\bf 159} (2004), no.~3, 1329--1360.

\bibitem{Sk} M. Bhargava and M. Satriano, On a notion of ``Galois closure'' for extensions of rings, preprint.

\bibitem{BS} M. Bhargava and A. Shankar, ``Binary quartic forms having bounded invariants, and the boundedness of the average rank of elltipc curves,'' preprint (arXiv:1006.1002v2).

\bibitem{BM} 
B. J. Birch\ and\ J. R. Merriman, Finiteness theorems for binary forms with given discriminant, Proc. London Math. Soc. (3) {\bf 24} (1972), 385--394.

\bibitem{BosmaStevenhagen} W. Bosma\ and\ P. Stevenhagen, On the computation of quadratic $2$-class groups, J. Th\'eor. Nombres Bordeaux {\bf 8} (1996), no.~2, 283--313.

\bibitem{CasI}
G. Casnati\ and\ T. Ekedahl, Covers of algebraic varieties. I. A general structure theorem, covers of degree $3,4$ and Enriques surfaces, J. Algebraic Geom. {\bf 5} (1996), no.~3, 439--460.


\bibitem{Cassels} J. W. S. Cassels, {\it Rational quadratic forms}, Academic Press, London, 1978.

\bibitem{Cohen} H. Cohen, {\it A course in computational algebraic number theory}, Springer, Berlin, 1993.

\bibitem{DD}
R. Dedekind, Supplement X in P.G.L. Dirichlet, {\it Vorlesungen \"{u}ber Zahlentheorie,} Vieweg, second edition (1871).


\bibitem{PrimeSplit}
I. Del Corso, R. Dvornicich\ and\ D. Simon, Decomposition of primes in non-maximal orders, Acta Arith. {\bf 120} (2005), no.~3, 231--244.

\bibitem{Delcubic} P. Deligne, letter to  W. T. Gan, B. Gross\ and\ G. Savin, November 13, 2000.

\bibitem{Delquartic} P. Deligne, letter to  M. Bhargava, March 5, 2004.

\bibitem{DF}B. N. Delone\ and\ D. K. Faddeev, {\it The theory of irrationalities of the third degree}, Amer. Math. Soc., Providence, R.I., 1964. (translation of B. N. Delone\ and\ D. K. Faddeev, Theory of Irrationalities of Third Degree, Acad. Sci. URSS. Trav. Inst. Math. Stekloff, {\bf 11} (1940).)

\bibitem{Fla} D. E. Flath, {\it Introduction to number theory}, Wiley, New York, 1989. 

\bibitem{GGS} W. T. Gan, B. Gross\ and\ G. Savin, Fourier coefficients of modular forms on $G\sb 2$, Duke Math. J. {\bf 115} (2002), no.~1, 105--169.

\bibitem{EGAIV}A. Grothendieck, \'{E}l\'ements de g\'eom\'etrie alg\'ebrique. {IV}. \'{E}tude
              locale des sch\'emas et des morphismes de sch\'emas {IV}, Inst. Hautes \'Etudes Sci. Publ. Math. No. 32 (1967), 361 pp.


\bibitem{Kneser} Kneser, Martin. Composition of binary quadratic forms.  J. Number Theory  15  (1982), no. 3, 406--413.


\bibitem{Naka}
J. Nakagawa, Binary forms and orders of algebraic number fields, Invent. Math. {\bf 97} (1989), no.~2, 219--235. 

\bibitem{Shaf} I. R. Shafarevich, {\it Basic algebraic geometry. 1}, Second edition, Translated from the 1988 Russian edition and with notes by Miles Reid, Springer, Berlin, 1994. 

\bibitem{Simon}
D. Simon, The index of nonmonic polynomials, Indag. Math. (N.S.) {\bf 12} (2001), no.~4, 505--517.

\bibitem{SimonIdeal}
D. Simon, La classe invariante d'une forme binaire, C. R. Math. Acad. Sci. Paris {\bf 336} (2003), no.~1, 7--10.

\bibitem{Simonobst}
D. Simon, A ``class group'' obstruction for the equation $Cy\sp d=F(x,z)$, J. Th\'eor. Nombres Bordeaux {\bf 20} (2008), no.~3, 811--828.


\bibitem{binarynic} M. M. Wood, Rings and ideal parametrized by binary $n$-ic forms, J. London Math. Soc. (2) 83 (2011) 208–-231.

\bibitem{BinQuad} M. M. Wood, Gauss composition over an arbitrary base, Adv. Math., 226 (2011) 1756-1771. 

\bibitem{Quartic} M. M. Wood, Parametrizing quartic algebras over an arbitrary base, to appear in Algebra and Number Theory, arXiv:1007.5503v1.

\end{thebibliography}
\end{document}